\def\newaliasedtheorem#1[#2]#3{
  \newaliascnt{#1@alt}{#2}
  \newtheorem{#1}[#1@alt]{#3}
  \expandafter\newcommand\csname #1@altname\endcsname{#3}
}
\theoremstyle{plain}
\newtheorem{theorem}{Theorem}[section]
\theoremstyle{remark}
\theoremstyle{definition}
\theoremstyle{remark}
\numberwithin{equation}{section}
\def\eps{\epsilon}
\def\R{\mathbb R}
\def\N{{\mathbb N}}
\def\Z{{\mathbb Z}}
\def\T{{\mathbb T}}
\def\Q{{\mathbb Q}}
\DeclareMathOperator{\diver}{div}
\DeclareMathOperator{\curl}{curl}
\newcommand*{\RR}{\ensuremath{\mathcal{R}}}
\newcommand{\PH}{\mathbb{P}_H}
\newcommand{\PP}{\mathbb{P}_{\neq 0 }}
\DeclareMathOperator{\Id}{Id}
\title{Typicality results for weak solutions of the incompressible Navier--Stokes equations}
\author[M. Colombo, L. De Rosa and  M. Sorella]{Maria Colombo, Luigi De Rosa \and Massimo Sorella}
\address{Maria Colombo
\hfill\break  \'Ecole Polytechnique F\'ed\'erale de Lausanne, Institute of Mathematics, Station 8, CH-1015 Lausanne, Switzerland.}
\email{maria.colombo@epfl.ch}
\address{Luigi De Rosa 
\hfill\break  \'Ecole Polytechnique F\'ed\'erale de Lausanne, Institute of Mathematics, Station 8, CH-1015 Lausanne, Switzerland.}
\email{luigi.derosa@epfl.ch}
\address{Massimo Sorella
\hfill\break \'Ecole Polytechnique F\'ed\'erale de Lausanne, Institute of Mathematics, Station 8, CH-1015 Lausanne, Switzerland.}
\email{massimo.sorella@epfl.ch}
\begin{document}
\maketitle
\begin{abstract}
In the class of $L^\infty((0,T);L^2(\T^3))$
 distributional solutions of the incompressible Navier-Stokes system, the ones which are smooth in some open interval of times are meagre in the sense of Baire category, and the Leray ones are a nowhere dense set.
\end{abstract}
\par
\medskip\noindent
\textbf{Keywords:} incompressible Navier--Stokes equations, nonsmooth distributional solutions, Leray solutions, convex integration, Baire category.
\par
\medskip\noindent
{\sc MSC (2010): 35Q30 - 35D30 - 76B03 - 26A21.
\par
}
\section{Introduction}\label{sec:intro}

In the last 15 years, the fundamental results of De Lellis and Sz\'ekelyhidi \cite{DLSZ09,DLSZ13,DLSZ17} initiated a research line which allowed to build nonsmooth distributional solutions of various equations in fluid dynamics with increasingly many regularity properties. All these results share a common approach called convex integration, which in this context points roughly speaking to build solutions of a nonlinear PDE by an iterative procedure, where at each step the constructed functions solve the equation up to a smaller and smaller error, which is corrected each time by means of the nonlinearity of the PDE. This lead to important results such as the proof of the Onsager conjecture by Isett \cite{Is18,BDSV19} and the construction of nonsmooth distributional solutions to the Navier-Stokes equations by Buckmaster and Vicol \cite{BV19,BCV,CL20}.
Related recent results were obtained for the hypodissipative Navier-Stokes equations \cite{CDLDR18,DR19} , the surface-quasigeostrophic equation \cite{BVS,IM20,CKL20} and the transport equation \cite{MS18,MS19,MSa20,BCD20} 
(see also the references quoted therein).\\

A natural question is then ``how many'' such distributional solutions can be found, compared to the smooth ones. 
In this paper we investigate this question in terms of Baire category.  We focus on the Navier-Stokes system in the spatial periodic setting $\T^3=\R^3 / \Z^3$
\begin{equation}\label{NS}
\left\{\begin{array}{l}
\partial_t v+  \diver(v \otimes v)+\nabla p-  \Delta v =0\\ 
 \diver v= 0\,
\end{array}\right.\qquad \mbox{in }  \T^3\times [0,T] 
\end{equation}
where $v:  \T^3\times [0,T]   \rightarrow \R^3$ represents the velocity of an incompressible fluid, $p:\T^3\times [0,T]  \rightarrow \R$ is the hydrodynamic pressure, with the constraint $\int_{\T^3}p\, dx =0$.\\

We define the following complete metric space
$$
\mathcal{D}:= \left\{ v \in L^\infty((0,T);L^2(\T^3)) \,: \, v\, \textit{ is a distributional solution of \eqref{NS}}  \right\},
$$
endowed with the metric $d_\mathcal{D}(u,v):=\|u-v\|_{L^\infty_t(L^2_x)}$, and its subsets
\begin{align*}
\mathcal{L}&:= \left\{v\in \mathcal{D} \, : \, v \textit{ is a Leray--Hopf solution of \eqref{NS}} \right\}\\
\mathcal{S}&:= \left\{v\in \mathcal{D} \, : \, v\in C^\infty(\T^3 \times I)  \textit{ for some open interval } I\subset(0,T) \right\}.
\end{align*}
We refer to Section~\ref{sec:defn} for the definitions of distributional and Leray-Hopf solutions. 
Our main result is the following
\begin{theorem}\label{t_main}
The set $\mathcal{L}$ is nowhere dense in $\mathcal{D}$ while the set $\mathcal{S}$ is meagre in $\mathcal{D}$.
\end{theorem}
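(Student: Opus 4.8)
The plan is to reduce both statements to three facts and then invoke the Baire category theorem: (a) $\mathcal L$ is closed in $\mathcal D$; (b) $\mathcal L\subseteq\mathcal S$; (c) $\mathcal S$ is meagre. Granting these, $\mathcal S$ meagre in the complete metric space $\mathcal D$ forces $\mathrm{int}_{\mathcal D}(\mathcal S)=\emptyset$, so by (b) $\mathrm{int}_{\mathcal D}(\mathcal L)=\emptyset$, and by (a) $\mathcal L=\overline{\mathcal L}$ has empty interior, i.e. is nowhere dense. The content is in (c); (a) and (b) are soft. For (a): if $v_n\to v$ in $L^\infty_tL^2_x$ with $v_n\in\mathcal L$, the sequence is bounded in $L^\infty_tL^2_x$, so the energy inequality bounds $\int_0^T\|\nabla v_n\|_{L^2_x}^2\,dt$ uniformly; hence $v\in L^2_tH^1_x$ with $v_n\rightharpoonup v$ in $L^2_tH^1_x$, and, working with weakly continuous representatives, $v_n(t)\to v(t)$ strongly in $L^2_x$ uniformly in $t$. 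Passing to the limit (weak lower semicontinuity for the dissipation term) preserves the energy inequality and weak time-continuity, and $\mathcal D$ is itself closed under $L^\infty_tL^2_x$ convergence, so $v\in\mathcal L$. For (b): a classical theorem of Leray together with weak--strong uniqueness shows that every $v\in\mathcal L$ is smooth on an open dense set of times, in particular on some open interval $I\subset(0,T)$, so $v\in C^\infty(\T^3\times I)\subseteq\mathcal S$.

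For (c) write $\mathcal S\subseteq\bigcup_{(J,M)}E_{J,M}$, the union over the countably many closed intervals $J$ with rational endpoints in $(0,T)$ and $M\in\mathbb N$, where $E_{J,M}:=\{v\in\mathcal D:\ \|v\|_{L^\infty(\T^3\times J)}\le M\ \text{and}\ v|_{\T^3\times J}\ \text{is }M\text{-Lipschitz}\}$; indeed, if $v\in\mathcal S$ is smooth on the open interval $I$, pick rational $[a,b]\subset I$ and $M\ge\|v\|_{C^1(\T^3\times[a,b])}$. Each $E_{J,M}$ is closed in $\mathcal D$ by Arzel\`a--Ascoli: an $L^\infty_tL^2_x$-convergent sequence inside it is equibounded and equi-Lipschitz on $\T^3\times J$, hence has a uniformly convergent subsequence, whose limit agrees with the $\mathcal D$-limit (still in $\mathcal D$) and retains both bounds. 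Thus it suffices to show each $E_{J,M}$ has empty interior, which exhibits $\mathcal S$ as a countable union of nowhere dense sets.

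Fix $v\in E_{J,M}$ and $\varepsilon>0$; we construct $w\in\mathcal D\setminus E_{J,M}$ with $\|v-w\|_{L^\infty_tL^2_x}<\varepsilon$. Since $v$ is Lipschitz on $\T^3\times J$ and solves \eqref{NS} there, parabolic regularity gives $v\in C^\infty(\T^3\times\mathrm{int}(J))$. Choose an open interval $J_0$ with $\overline{J_0}\subset\mathrm{int}(J)$, a cutoff $\chi\in C_c^\infty(J_0)$ equal to $1$ on a subinterval, and a small smooth divergence-free mean-zero field $\psi$ on $\T^3$. The field $\bar v:=v+\chi\psi$ is smooth on $\T^3\times\mathrm{int}(J)$, coincides with $v$ off $\mathrm{supp}\,\chi$, and on $J_0$ solves \eqref{NS} with a smooth Reynolds stress $\mathring R$ supported in $\mathrm{supp}\,\chi\Subset J_0$, of the schematic form $\mathring R=\mathcal R\bigl(\partial_t(\chi\psi)-\Delta(\chi\psi)+\diver(v\otimes\chi\psi+\chi\psi\otimes v+\chi^2\psi\otimes\psi)\bigr)$, with $\|\mathring R\|_{C^0}\lesssim\|\psi\|_{C^1}\bigl(1+\|v\|_{C^0(\T^3\times\overline{J_0})}+\|\psi\|_{C^0}\bigr)$, hence arbitrarily small. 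Feeding the smooth subsolution $(\bar v,\mathring R)$ into the Navier--Stokes convex integration scheme and its time-localized variant \cite{BV19,BCV}, one obtains an exact distributional solution $w\in C_tL^2_x\subseteq\mathcal D$ that equals $\bar v$ (hence $v$) outside $\mathrm{supp}\,\chi$, satisfies $\|w-\bar v\|_{L^\infty_tL^2_x}<\varepsilon/2$, and has prescribed kinetic energy, chosen strictly increasing on some subinterval of $\mathrm{supp}\,\chi$. Then $\|w-v\|_{L^\infty_tL^2_x}<\varepsilon/2+\|\psi\|_{L^2_x}<\varepsilon$; $w$ is a genuine global element of $\mathcal D$ because it is $L^2_x$-continuous and solves \eqref{NS} away from the two endpoints of $\mathrm{supp}\,\chi$, a removable set in time; and $w$ is not smooth near the subinterval where its energy grows, since smooth solutions dissipate energy, so $w\notin E_{J,M}$ (and, as a bonus, $w\notin\mathcal L$). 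Hence $E_{J,M}$ has empty interior.

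The main obstacle is precisely this convex integration step: starting the scheme from the low-regularity datum supplied by an arbitrary element of $\mathcal D$ — circumvented here by localizing to the interval $J_0$, where parabolic regularity makes $v$ smooth, and by seeding a small artificial Reynolds stress via the bump $\chi\psi$ — localizing the added high-frequency oscillations in time so that the corrected solution patches back into $v$ to form a bona fide global distributional solution, controlling the correction by $\varepsilon$ in $L^\infty_tL^2_x$, and certifying wildness by prescribing a strictly increasing energy on a short subinterval. All of this is within reach of existing Navier--Stokes convex integration technology \cite{BV19,BCV,CL20}, while the remaining ingredients (closedness of $\mathcal L$ and of $\mathcal D$, epochs of regularity, the Baire-theoretic bookkeeping) are routine.
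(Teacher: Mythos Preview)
Your strategy coincides with the paper's: show that any distributional solution which is smooth on an open time interval can be approximated in $L^\infty_tL^2_x$ by one whose kinetic energy is strictly increasing on a subinterval (hence neither locally smooth nor Leray), then combine this with a countable Baire decomposition of $\mathcal S$ and the closedness of $\mathcal L$. Two minor differences are worth noting. First, your decomposition $\mathcal S\subseteq\bigcup E_{J,M}$ via closed Lipschitz-bound sets is slightly cleaner bookkeeping than the paper's $\mathcal S\subseteq\bigcup\{v:\ v\in C^\infty(\T^3\times(t-s,t+s))\}$, since closedness of $E_{J,M}$ makes ``empty interior'' and ``nowhere dense'' coincide; the paper instead uses (implicitly) that non-increasing energy passes to $L^\infty_tL^2_x$ limits to place the perturbed solution outside the closure. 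Second, your seeding step with the bump $\chi\psi$ is unnecessary: since $v$ is already an exact smooth solution on $\mathrm{int}(J)$, the paper starts the iteration directly from the triple $(v,p,0)$ with zero Reynolds stress, building the desired energy gap $e(t)-\int|v|^2\,dx=g(t)$ into the target profile rather than into a perturbed subsolution. Finally, the time-localized, energy-prescribing convex integration step you invoke as existing technology from \cite{BV19,BCV} is precisely what the paper proves as its main technical content (Proposition~\ref{p_iterative}); your proposal correctly identifies the architecture but defers the substance to that proposition.
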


We recall that $\mathcal{L}$ is nowhere dense in $\mathcal{D}$ if and only if the closure of $\mathcal{L}$ has empty interior. In particular, $\mathcal{L}$ is meagre in $\mathcal{D}$.\\

A partial answer to the question of ``how many'' distributional solutions there are, compared to the smooth ones, was given before by the so called ``h-principle'', a term introduced by Gromov in the context of isometric embeddings. In the context of the Euler equations (see for instance \cite[Theorem 6]{DS15}), it states that arbitrarily close in the weak $L^2$ topology to a (suitably defined) strict subsolution 
 one can build an exact distributional solution. In a slightly different direction, it has been shown in \cite{DanSz} that a dense set of initial data admits infinitely many distributional solutions with the same kinetic energy, and in \cite{BCV} that distributional solutions are nonunique for any initial datum in $L^2$ for the Navier-Stokes system.
Previously, convex integration was also used in \cite{DRT20} to characterize typical energy profiles for the Euler equations in terms of H\"older spaces, which requires to introduce a suitable metric space to deal with the right energy regularity.

\subsection*{Aknowledgements}
The authors aknowledge the support of the SNF Grant $200021\_182565$.

\section{The iterative proposition and proof of the main theorem}

The proof of Theorem~\ref{t_main} is based on an iterative proposition, typical of convex integration schemes and analogous to \cite[Section 7]{BV19} and \cite[Section 2]{BCV}; in analogy with the latter, also here we use intermittent jets (see Section 3 below) as the fundamental building blocks.
At difference to the previously cited works, we need to keep track of the kinetic energy in some intervals of time along the iteration in such a way to be able to prescribe it in the limit, and we also need to make sure with a simple use of time cutoffs that the support of the perturbation is localized in a converging sequence of enlarging sets. On the contrary, we don't use the cutoffs to obtain a small set of singular times for our limit, as was done in \cite{BCV}.



In turn the proof of Theorem~\ref{t_main} follows from the iterative proposition in this way: to show that the subset $\mathcal{L}$ is nowhere dense in the metric space $\mathcal{D}$, we prove that for every $v\in \mathcal{L}$ there are arbitrarily close elements which belong to $\mathcal{D}\setminus \mathcal{L}$. In Step 1 of the proof we reduce to such statement, where we choose elements in $\mathcal{D} \setminus \mathcal{L}$ by imposing locally increasing kinetic energy.\\

%



The method presented here to prove Theorem~\ref{t_main} is quite general in contexts where the convex integration scheme works and  should apply also to other contexts.

\subsection{Basic notations and definitions}\label{sec:defn}

We recall that a distributional solution of the system \eqref{NS} is a vector field $v\in L^2(\T^3\times (0,T);\R^3)$ such that 
$$
\int_0^T\int_{\T^3}\left( v\cdot \partial_t \varphi+ v\otimes v : \nabla \varphi+ v \cdot \Delta \varphi \right)\,dx dt=0,
$$
for all $\varphi\in C^\infty_c(\T^3\times (0,T);\R^3)$ such that $\diver \varphi=0$. 
The pressure does not appear in the distributional formulation because it can be recovered as the unique $0$-average solution of 
\begin{equation}
\label{eqn:p}
-\Delta p=\diver \diver (v\otimes v).
\end{equation}

A Leray Hopf solution of the system \eqref{NS} is a vector field $v \in L^2 ((0,T);H^1(\T^3)) \cap L^\infty((0,T);L^2(\T^3))$ and for a.e. $s \geq 0$ and for all $t \in [s, T]$ the following inequality holds
\begin{align} \label{leray_inequality}
 \int_{\T^3} \dfrac{|v(x,t)|^2}{2} dx  + \int_s^{t} \int_{\T^3} |\nabla v(x,\tau)|^2 dx d\tau \leq \int_{\T^3} \dfrac{|v(x,s)|^2}{2} dx.
\end{align} 
It is a classical result by Leray that Leray-Hopf solutions are smooth outside a closed set of times of Hausdorff dimension $1/2$, see for instance \cite{LP}.

\subsection{The Navier--Stokes--Reynolds system}
In this section, for every integer $q \geq 0$ we will highlight the construction of a solution $(v_q, p_q, \mathring{R}_q)$ to the Navier-Stokes-Reynolds system
\begin{align} \label{reynolds}
\left\{\begin{array}{l}
\partial_t v_q+  \diver(v_q \otimes v_q)+\nabla p_q-  \Delta v_q =\diver \mathring R_q\\ 
 \diver v_q= 0\,
\end{array}\right.
\end{align}
where the Reynolds stress $\mathring{R}_q$ is assumed to be a trace-free symmetric matrix valued function. Indeed for any matrix $A$ we will use the notation $\mathring{A}$ to remark the traceless property.

\subsection{Parameters}

Define the frequency parameter $\lambda_q \rightarrow + \infty$ and the amplitudes parameter $\delta_q \rightarrow 0^+$ by

\begin{align*}
\lambda_q = 2 \pi a^{(b^q)},
\\
\delta_q = \lambda_q^{-2 \beta}.
\end{align*}

The sufficiently large (universal) parameter $b$ is free, and so is the sufficiently small parameter $\beta= \beta(b)$. The parameter $a$ is chosen to be a sufficiently large multiple of the geometric constant $n_\ast$.
Moreover, we fix another parameter useful to prescribe a precise kinetic energy

\begin{align}
\epsilon_1 := \left ( \frac{\epsilon}{\sup_{\xi \in \Lambda } \| \gamma_\xi \|_{C^0} |\Lambda| C_0 4 (2 \pi)^3} \right )^2,
\end{align}
where $\sup_{\xi \in \Lambda } \| \gamma_\xi \|_{C^0}, |\Lambda|, C_0$ are all universal constants independent on $q$, more precisely:
 $\gamma_\xi$ are functions defined in Lemma \ref{l_geometric},
 $\Lambda$ is the finite set defined in Lemma \ref{l_geometric},
 $C_0$ is the constant given by Lemma \ref{l_decorellation},
 $\epsilon$ is a free constant that will be used in the proof of Theorem \ref{t_main}.

Moreover, we will use the intermittent jets (defined in Section \ref{int_jets}) to define the new velocity increment at step $q+1$. 
\subsection{Inductive estimates and iterative proposition}

We define new ``slow'' parameters, for all $q \geq 0$
 
\begin{align}
& s_q:= \left( \frac{s}{2} \right)^{q+1}, 
\\
& S_q := \sum_{i = 0}^{q}  s_i , 
\end{align}
for some fixed parameter $s>0$. By choosing $a_0(s)$ sufficiently large,
we will guarantee that

\begin{align*}
s_{q+1}^{-1} \ll \lambda_q,
\end{align*}
indeed $s_q^{-1}$ is a slow parameter compared to $\lambda_q$.
Moreover we define the local time interval, for some small number $s>0$, for all $q \geq 0$
\begin{align} \label{locale_intervallo_i_q}
I_q := (t_0 - S_q, t_0 + S_q),
\end{align}
for some $t_0 \in (0,1)$ and $s=s (t_0)>0$ sufficiently small such that 
$$
B_{2s}(t_0):= (t_0 - 2s, t_0 +2s) \subset [0,1].
$$
Observe that $I_q \subset B_{2s}(t_0)$ for all $q \geq 0$.

In the following, if not specified differently, every space norm is taken with respect to the sup in time localized in the interval $B_{2s}(t_0)$, i.e. for example: if $v \in L^\infty_t L^p_x$, we denote $\|v\|_{L^p} $ the quantity $\sup_{t\in B_{2s}(t_0)} \|v(t, \cdot ) \|_{L^p_x}$. We use $\lesssim$ as an inequality that holds up to a constant independent on $q$.\\

For $q \geq 0$, we want to guarantee 

\begin{subequations} \label{stima_norme_vq_Rq}
\begin{align}
\|v_q\|_{L^2} \leq 2 \|v_0 \|_{L^2} - \frac{\epsilon }{  4 \pi} \delta_{q}^{1/2} ,  \label{stima_norma_v_q}
 \\
\|\mathring{R}_q\|_{L^1} \leq  \lambda_q^{-3 \zeta} \delta_{q+1} , \label{stima_l1_su_Rq}
\\
\|v_q\|_{C^1_{x,t}(\T^3 \times B_{2s}(t_0))} \leq \lambda_q^4,  \label{stima_norma_v_q_C^1}
\end{align}
\end{subequations}
and moreover\footnote{Here Supp$_T(u)$ denotes the closure of $ \{ t \in (0,1): \exists x \in \T^3 \ \ u(x,t) \neq 0 \}.$}
\begin{subequations}  \label{stima_energia_supportoR_supporto_vq}
\begin{align}
\frac{\delta_{q+1}}{ \delta_1 \lambda_q^{\zeta/2}} \leq e(t) - \int_{\T^3} |v_q(x,t)|^2 dx \leq \frac{\delta_{q+1}  \epsilon_1}{\delta_1},  \text{ for all } t \in I_0,  \label{stima_energia}
\\
\text{Supp}_T (\mathring{R}_q) \subset I_q, \label{supporto_R_q}
\\
\text{Supp}_T (v_{q} - v_{q-1}) \subset I_q, \text{ for all } q \geq 1, \label{supporto_diff_v_q}
\end{align}
\end{subequations}
which are new with respect to the convex integration scheme proposed by Buckmaster and Vicol in \cite[Section 7]{BV19}.

\begin{prop} [Iterative Proposition] \label{p_iterative}
Let $e: [0,T ] \rightarrow (0, \infty)$ be a  strictly positive smooth function. For every $ \epsilon,s>0$ and $t_0\in(0,T)$ there exist $b>1$, $\beta(b)>0$, $\zeta >0$, $a_0= a_0( \beta, b, \zeta, e, \epsilon, s)$
such that 
for any $a \geq a_0$ which is a multiple of the geometric constant $n_\ast$ of Lemma \ref{l_geometric}, the following holds.
Let $(v_q,p_q, \mathring{R}_q)$ be a smooth triple solving the Navier-Stokes-Reynolds system \eqref{reynolds}  in $ \T^3\times B_{2s}(t_0)$ satisfying the inductive estimates \eqref{stima_norme_vq_Rq}-\eqref{stima_energia_supportoR_supporto_vq}.

Then there exists a second smooth triple  $(v_{q+1}, p_{q+1}, \mathring{R}_{q+1})$ which solves the Navier-Stokes-Reynolds system  in $ \T^3\times B_{2s}(t_0)$
\eqref{reynolds}, satisfies the estimates \eqref{stima_norme_vq_Rq}  and \eqref{stima_energia_supportoR_supporto_vq} at level $q+1$. In addition, we have that
\begin{align}  \label{stima_differenza_l2_vq}
\|v_{q+1}- v_q\|_{L^\infty(B_{2s}(t_0);L^2(\T^3))} \leq  \frac{\epsilon }{ \delta_1^{1/2} 4 \pi}\delta_{q+1}^{1/2}.
\end{align}
\end{prop}

\subsection{Proof of Theorem~\ref{t_main}}

{\it Step 1.  Let $v \in L^\infty ((0,T); L^2(\T^3))$ be a distributional solution of \eqref{NS}, such that $v \in C^{\infty}(\T^3 \times I)$, for some open interval $I \subset (0,T)$.
Then, we prove the following claim: for every $\epsilon >0$, there exists a distributional solution $v_{\epsilon} \in L^\infty ((0,T); L^2(\T^3))$ of \eqref{NS} such that
\begin{align} \label{p_main_inequality}
\| v_{\epsilon} - v \|_{L^\infty((0,T);(L^2(\T^3))} < \epsilon
\end{align}
and the kinetic energy of $v_{\epsilon}$
is strictly increasing in a sub-interval of $(0,T)$.\\

}

Let $t_0\in I$ and choose $s>0$ such that $\overline B_{2s}(t_0)\subset I$. Let $g \in C^\infty([0,T];[\frac{\epsilon_1}{2},\epsilon_1]) $ be such that

$$ g'(t_0) > \sup_{t \in (0,1)}  \left | \frac{d}{d t}  \int_{\T^3} |v(x,t)|^2 dx \right |,$$
and consider the kinetic energy (increasing in a neighbourhood of $t_0$)
\begin{align} \label{d_energia}
e(t):= \int_{\T^3} |v(x,t)|^2 dx + g(t).
\end{align}

Since the function $v$ is smooth in $\T^3\times I$ we consider the smooth solution $p$, with zero average, in $\T^3\times I$ of 
\eqref{eqn:p}, and define the starting triple $ (v_0, p_0, R_0) :=(v, p, {0})$.

Clearly $(v, p, {0})$ satisfies the estimates \eqref{stima_norme_vq_Rq} and \eqref{stima_energia_supportoR_supporto_vq} at step $q=0$, up to enlarge $a_0$\footnote{To be precise we considered $v_{-1}=v_0$.}, thus we can apply Proposition \ref{p_iterative} starting from the triple $ (v_0, p_0, R_0)$.
Hence, we get a sequence $\{ v_q \}_{q \in \N}$ that satisfies \eqref{stima_norme_vq_Rq}, \eqref{stima_energia_supportoR_supporto_vq} and moreover, from \eqref{stima_differenza_l2_vq} we get
 \begin{align}
\label{eqn:serieq}
 \sum_{q \geq 0} \|v_{q+1} - v_{q} \|_{L^2} &  \leq
\frac{\epsilon }{ \delta_1^{1/2} 4 \pi} \sum_{q \geq 0} \delta_{q+1}^{1/2} 
\leq 
\frac{\epsilon }{ \delta_1^{1/2} 4 \pi} \sum_{q \geq 0} (a^{- \beta b})^{q+1}\leq \frac{\epsilon}{2 (1- a^{- \beta b})}< \eps
 \end{align}
 where the last holds if $a_0$ is sufficiently large in order to have $a^{- \beta b} < 1/2$. Hence, there exists the limit $\tilde v_{\epsilon} := \lim_{q \rightarrow \infty} v_q$, in $L^\infty(B_{2s}(t_0); L^2(\T^3))$ such that $\|\tilde v_{\epsilon} - v \|_{L^\infty(B_{2s}(t_0); L^2(\T^3))}  <\eps$ and it is a distributional solution of the Navier-Stokes equations in $ B_{2s}(t_0)\times \T^3$, because by \eqref{stima_l1_su_Rq} we have that
 $\lim_{q \rightarrow \infty} \mathring{R}_q =0$ in $L^\infty(B_{2s}(t_0); L^1(\T^3))$.
One can verify that the vector field
\begin{align*}
 v_\epsilon=
\left\{\begin{array}{l}
\tilde v_\epsilon\quad \textit{in } B_{2s}(t_0)\\ 
v\quad \textit{in } [0,T]\setminus B_{2s}(t_0),
\end{array}\right.
\end{align*}
still solves \eqref{NS} in $[0,T]\times\T^3$ and satisfies \eqref{p_main_inequality}. Moreover the kinetic energy of $v_{\epsilon}$ is increasing in a neighbourhood of $t_0$ thanks to \eqref{stima_energia} and \eqref{d_energia}. \\

{\it Step 2. We conclude the proof of Theorem \ref{t_main}.}\\

Let $v_0$ be a distributional solution which is smooth in a subinterval of times and $\eps>0$; for instance, any Leray solution can be taken as $v_0$ since they are smooth outside a closed set of $\mathcal H^{1/2}$ measure $0$. 
We apply the Step 1 and get a distributional solution of Navier-Stokes $v_{\epsilon} \in L^\infty ((0,T); L^2(\T^3))$ such that
$\| v_\epsilon - v_1 \|_{L^\infty((0,T);L^2(\T^3))} < \epsilon$ with increasing kinetic energy in a sub-interval of $[0,T]$ and therefore such that $v_\epsilon \in \mathcal D \setminus \mathcal{L}$. 

Since $\mathcal L$ is closed with respect to $L^\infty L^2$ convergence, we deduce that the interior of $\overline{ \mathcal L}$ which coincides with the interior of  $\mathcal L$, is empty.

To show that $\mathcal{S}$ is a meagre set in $\mathcal{D}$, we rewrite it as 
$$ \mathcal{S} \subset \bigcup_{s \in \Q^+}   \bigcup_{t \in (0,1) \cap \Q } \{ v \in \mathcal{D} :  v \in C^\infty ((t -{s}, t+{s}) \times \T^3) \}
,$$
%
%
and we notice that from Step 1 the right-hand side is a countable union of nowhere dense sets, hence it is meagre. 

\section{Intermittent jets}\label{int_jets}

In this section we recall from \cite{BV19} the definition and the main properties of intermittent jets we will use in the convex integration scheme.
 
 \subsection{A geometric lemma.}
 We start with a geometric lemma. A proof of the following version, which is essentially due to De Lellis and Sz\'ekelyhidi Jr., can be found in  \cite[Lemma 4.1]{BCV}. This lemma allows us to reconstruct any symmetric $3 \times 3$ stress tensor $R$ in a neighbourhood of the identity as a linear combination of a particular basis.
 
 \begin{lemma} \label{l_geometric}
Denote by $\overline{B_{1/2}^{sym}} (Id)$ the closed ball of radius $1/2$ around the identity matrix in the space of symmetric $3 \times 3$ matrices.
There exists a finite set $\Lambda \subset \mathbb{S}^2 \cap \Q^3$ such that there exist $C^\infty $ functions $\gamma_{\xi} : B_{1/2}^{sym}(Id) \rightarrow \R $
which obey

$$R=  \sum_{\xi \in \Lambda} \gamma_\xi^2 ( {R}) \xi \otimes \xi, $$
for every symmetric matrix $R$ satisfying $|R-Id| \leq 1/2$. Moreover for each $\xi \in \Lambda$, let use define $A_{\xi} \in \mathbb{S}^2 \cap \Q^3$ to be an orthogonal vector to $\xi$. Then for each $\xi \in \Lambda$ we have that $\{ \xi, A_\xi , \xi \times A_\xi \} \subset \mathbb{S}^2 \cap \Q^3 $ form an orthonormal basis for $\R^3$.
Furthermore, since we will periodize functions,  let $n_\ast$ be the l.c.m. of the denominators of the rational numbers $\xi, A_\xi$ and $\xi \times A_\xi$, such that 
$$ \{ n_\ast \xi, n_\ast A_\xi , n_\ast \xi \times A_\xi \} \subset \Z^3.$$
\end{lemma}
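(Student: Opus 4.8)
The plan is to recover each $\gamma_\xi(R)$ as the square root of a coefficient in a conic decomposition $R=\sum_{\xi\in\Lambda}t_\xi(R)\,\xi\otimes\xi$ with all $t_\xi(R)>0$, chosen to depend smoothly on $R$. The only role of the radius $1/2$ is this: if $|R-\Id|\le\tfrac12$ then the eigenvalues of $R$ lie in $[\tfrac12,\tfrac32]$, so $\overline{B_{1/2}^{sym}}(\Id)$ is a \emph{compact} subset of the set of positive definite symmetric matrices, which by the spectral theorem is precisely the interior (inside $\mathrm{Sym}(3)$, the space of symmetric $3\times3$ matrices) of the convex cone generated by the rank-one matrices $\xi\otimes\xi$, $\xi\in\mathbb S^2$. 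Since $\mathbb S^2\cap\Q^3$ is dense in $\mathbb S^2$, the conic hull of $\{\xi\otimes\xi:\xi\in\mathbb S^2\cap\Q^3\}$ has the same closure, hence the same interior; I would write this interior as the increasing union, over finite subsets $\Lambda\subset\mathbb S^2\cap\Q^3$, of the interiors of the cones $\mathcal C_\Lambda:=\{\sum_{\xi\in\Lambda}t_\xi\,\xi\otimes\xi:\ t_\xi\ge0\}$, and then invoke compactness of $\overline{B_{1/2}^{sym}}(\Id)$ to obtain a single finite $\Lambda$ with $\overline{B_{1/2}^{sym}}(\Id)\subset\mathrm{int}\,\mathcal C_\Lambda$. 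In particular $\{\xi\otimes\xi\}_{\xi\in\Lambda}$ then spans $\mathrm{Sym}(3)$.

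Next I would construct the $\gamma_\xi$ for such a $\Lambda$. The linear map $\Gamma:\R^\Lambda\to\mathrm{Sym}(3)$, $\Gamma(t)=\sum_{\xi\in\Lambda}t_\xi\,\xi\otimes\xi$, is surjective and, by an open-mapping argument, carries the open positive orthant $\{t_\xi>0\}$ onto $\mathrm{int}\,\mathcal C_\Lambda$, which contains $\overline{B_{1/2}^{sym}}(\Id)$. For $R\in\mathrm{int}\,\mathcal C_\Lambda$ the fibre $\Gamma^{-1}(R)\cap\{t>0\}$ is a nonempty bounded open polytope slice (bounded because $\sum_{\xi\in\Lambda}t_\xi=\mathrm{tr}\,R$ is prescribed), and I would take $t(R)$ to be the unique minimiser on it of the strictly convex coercive barrier $t\mapsto-\sum_{\xi\in\Lambda}\log t_\xi$. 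Its Lagrange (KKT) system is nondegenerate because $\Gamma$ is onto, so the implicit function theorem yields $R\mapsto t(R)\in C^\infty(\mathrm{int}\,\mathcal C_\Lambda)$ with $t_\xi(R)>0$; hence $\gamma_\xi:=\sqrt{t_\xi}$ are $C^\infty$ and satisfy $\sum_{\xi\in\Lambda}\gamma_\xi^2(R)\,\xi\otimes\xi=\Gamma(t(R))=R$ on all of $\mathrm{int}\,\mathcal C_\Lambda$, in particular on $B_{1/2}^{sym}(\Id)$.

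For the remaining assertions I would argue that every $\xi=(\xi_1,\xi_2,\xi_3)\in\mathbb S^2\cap\Q^3$ admits a rational unit vector $A_\xi\perp\xi$: after permuting coordinates and eliminating one variable from $\xi\cdot A=0$, the equation $|A|^2=1$ becomes a rational binary conic whose discriminant is a perfect square, and which is therefore solvable over $\Q$ because $1-\xi_j^2=\xi_k^2+\xi_\ell^2$ is a sum of two rational squares (the case $\xi=\pm e_j$ being trivial); in fact its rational points are dense in the great circle $\mathbb S^2\cap\xi^\perp$. For such $A_\xi$ the vector $\xi\times A_\xi$ is automatically rational and has norm $|\xi|\,|A_\xi|=1$ since $\xi\perp A_\xi$, so $\{\xi,A_\xi,\xi\times A_\xi\}\subset\mathbb S^2\cap\Q^3$ is an orthonormal basis of $\R^3$; letting $n_\ast$ be the least common multiple of all denominators appearing in these vectors for $\xi\in\Lambda$ clears denominators and concludes the proof.

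The part that requires genuine care is the smooth dependence $R\mapsto(\gamma_\xi(R))$: one must exhibit a smooth selection of the fibres $\Gamma^{-1}(R)\cap\{t>0\}$, which I would do via the barrier above, or alternatively by gluing local sections of the submersion $\Gamma$ with a partition of unity (legitimate because these fibres are convex). The other nonelementary ingredient is the quantitative claim that a \emph{finite} cone $\mathcal C_\Lambda$ already engulfs the whole ball $\overline{B_{1/2}^{sym}}(\Id)$, not merely a small neighbourhood of $\Id$; this is precisely what the compactness/density argument in the first paragraph delivers. Everything else is routine linear algebra together with the density of rational points on $\mathbb S^2$.
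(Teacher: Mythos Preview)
The paper does not supply its own proof of this lemma; it simply cites \cite[Lemma 4.1]{BCV}, where the statement is in turn attributed to De Lellis--Sz\'ekelyhidi. Your argument is correct but proceeds along a genuinely different route from the standard one. In the literature one typically fixes an \emph{explicit} finite $\Lambda\subset\mathbb S^2\cap\Q^3$, verifies by hand that $\{\xi\otimes\xi\}_{\xi\in\Lambda}$ spans $\mathrm{Sym}(3)$ and that $\Id=\sum_\xi c_\xi\,\xi\otimes\xi$ with strictly positive $c_\xi$, and then applies the inverse function theorem at this single base point to obtain smooth positive coefficients on some ball $B_{\varepsilon_0}(\Id)$; the specific radius is either checked for the chosen $\Lambda$ or left as an unspecified $\varepsilon_0$, since the scheme only needs \emph{some} fixed radius. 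Your density-plus-compactness construction of $\Lambda$, together with the analytic-center selection $t(R)=\arg\min\{-\sum_\xi\log t_\xi:\Gamma t=R\}$ as a global smooth section of $\Gamma$, is a valid and rather elegant alternative that delivers the full prescribed ball in one stroke, at the price of not exhibiting $\Lambda$ concretely. One small simplification for the last part: your conic argument for the rational $A_\xi$ (whose ``discriminant'' remark is a bit imprecise, though the sum-of-two-squares reduction behind it is sound) can be replaced by a one-line construction---for $\xi\neq -e_3$ the Householder matrix $H=I-2vv^T/|v|^2$ with $v=\xi+e_3$ has rational entries and sends $e_3$ to $-\xi$, so $M:=H\,\mathrm{diag}(1,1,-1)$ is a rational rotation with third column $\xi$, and its first two columns give $A_\xi$ and $\xi\times A_\xi$ directly.
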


\subsection{Vector fields}
 
Let $\Phi : \R^2 \rightarrow \R $ be a smooth function with support contained in a ball of radius 1. We normalize $\Phi$
such that $\phi = - \Delta \Phi$ obeys 

\begin{align} 
 \frac{1}{4 \pi^2} \int_{\R^2} \phi^2(x_1,x_2) dx_1 dx_2 =1 .
\end{align}

We remark that by definition $\phi$ has zero average. Define $\psi : \R \rightarrow \R $ to be a smooth,  zero average function with support in the ball of radius 1 satisfying 
\begin{align*}
\fint_{\T} \psi^2(x_3) dx_3 = \frac{1}{2 \pi} \int_{\R} \psi^2(x_3) dx_3 =1.
\end{align*}
We define the parameters $r_\perp$, $r_{||}$ and $\mu$ as follows
\begin{subequations} \label{d_parametri_r_perp_r_par}
\begin{align}
r_\perp := r_{\perp, q+1} := \lambda_{q+1}^{-6/7} (2 \pi )^{-1/7},
\\
r_{||} := r_{||, q+1} := \lambda_{q+1}^{-4/7},
\\
 \mu := \mu_{q+1}:= \lambda_{q+1}^{9/7} (2 \pi )^{1/7}.
\end{align}
\end{subequations}
We define $\phi_{r_\perp}, \Phi_{r_{\perp}}, $and $\psi_{r_{||}}$ 
to be the rescaled cut-off functions
\begin{align*}
\overline{\phi_{r_\perp}}(x_1,x_2):=\frac{1}{r_{\perp}} \phi \left (\frac{x_1}{r_{\perp}},\frac{x_2}{r_{\perp}} \right ),
\\
\overline{\Phi_{r_\perp}}(x_1,x_2):=\frac{1}{r_{\perp}} \Phi \left (\frac{x_1}{r_{\perp}},\frac{x_2}{r_{\perp}} \right ),
\\
\overline{\psi_{r_{||}}}(x_3):=\left (\frac{1}{r_{||}} \right )^{1/2} \psi \left (\frac{x_3}{r_{||}} \right ).
\end{align*}

With this rescaling we have $ \overline{\phi_{r_{\perp}}}= - r_{\perp}^2 \Delta \overline{\Phi_{r_{\perp}}}$. Moreover the functions $\overline{\phi_{r_{\perp}}}$ and $\overline{\Phi_{r_{\perp}}}$ are supported in the ball of radius $r_{\perp}$ in $\R^2$, $\psi_{r_{||}}$ is supported in the ball of radius $r_{||}$ in $\R$ and we keep the normalizations $\|\overline{\phi_{r_{\perp}}}\|_{L^2}^{2}= 4 \pi^2$ and $\|  \overline{\psi_{r_{||}}} \|_{L^2}^2=2 \pi.$

We then periodize the previous functions
\begin{align*}
{\phi_{r_\perp}}(x_1+ 2 \pi n,x_2 +  2 \pi m)= \overline{\phi_{r_\perp}}(x_1,x_2), 
\\
{\Phi_{r_\perp}}(x_1+ 2 \pi n,x_2 +  2 \pi m)= \overline{\Phi_{r_\perp}}(x_1,x_2), 
\\
{\psi_{r_{||}}}(x_3 + 2 \pi n)= \overline{\psi_{r_{||}}}(x_3).
\end{align*}
For every $\xi \in \Lambda$ (recalling the notations in Lemma \ref{l_geometric}), we introduce the functions defined on $\T^3 \times \R$
\begin{subequations} \label{d_psi_phi_Phi}
\begin{align} 
\psi_{\xi} (x,t) 
 := \psi_{r_{||}} (n_\ast r_{\perp } \lambda_{q+1} (x \cdot \xi + \mu t)), \\
\Phi_{\xi} (x) 
:= \Phi_{r_{\perp}} (n_\ast r_{\perp } \lambda_{q+1} (x- \alpha_\xi) \cdot A_\xi , n_\ast r_{\perp } \lambda_{q+1} (x- \alpha_\xi) \cdot (\xi \times A_\xi)) ,
\\
\phi_{\xi} (x) := \phi_{r_{\perp}} (n_\ast r_{\perp } \lambda_{q+1} (x- \alpha_\xi) \cdot A_\xi , n_\ast r_{\perp } \lambda_{q+1} (x- \alpha_\xi) \cdot (\xi \times A_\xi)) ,
\end{align}
\end{subequations}
where $\alpha_\xi$ are shifts which ensure that the functions $\{  \Phi_\xi \}$ have mutually disjoint support.

In order for such shifts $\alpha_\xi$ to exist, it is sufficient to assume that $r_{\perp}$ is smaller than a universal constant, which depends only on the geometry of the finite set $\Lambda$.

It is important to note that the function
$\psi_\xi$ oscillates at  frequency proportional to $r_{\perp}
r_{||}^{-1} \lambda_{q+1}$, whereas $\phi_{\xi}$ and $\Phi_\xi$  oscillate at frequency proportional to $\lambda_{q+1}$.

\begin{definition} \label{d_intermittent}
The intermittent jets are vector fields $W_\xi : \T^3 \times \R \rightarrow \R^3$ defined as
\begin{align*}
W_\xi (x,t) := \xi \psi_\xi (x,t) \phi_\xi (x).
\end{align*}
\end{definition}

If $\sigma:= r_\perp n_\ast \in \N$, thanks to the choice of $n_\ast$ in Lemma \ref{l_geometric} we have that $W_\xi$ has zero average in $\T^3$
and is $ \left (\frac{\T}{\sigma} \right )^3$ periodic. Moreover, by our choice of $\alpha_\xi$, we have that
\begin{align*}
W_\xi \otimes W_{\xi'} \equiv 0,
\end{align*}
whenever $\xi \neq \xi' \in \Lambda$,
i.e. $\{ W_\xi\}_{\xi \in \Lambda} $ have mutually disjoint support. The essential identities obeyed by the intermittent jets are
\begin{align}
\|W_\xi \|_{L^p(\T^3)}^p = \frac{1}{8 \pi^3} \| \psi_\xi \|_{L^p(\T^3)}^p \|\phi_\xi \|_{L^p(\T^3)}^p  \nonumber
\\
\text{ div} (W_\xi \otimes W_\xi) = 2 (W_\xi \cdot \nabla \psi_\xi) \phi_\xi \xi = \frac{1}{\mu} \partial_t (\phi_{\xi}^2 \psi_\xi^2 \xi) \label{identity_intermittent_deriv_temporale}
\\
\fint_{\T^3} W_{\xi} \otimes W_\xi =\xi \otimes \xi, \nonumber
\end{align}
where the last identity will be useful to apply Lemma \ref{l_geometric}.\\

We denote by $\PP$ the operator which projects a function onto its non-zero frequencies $\PP f = f - \fint_{\T^3} f$, and by $\PH$ we will denote the usual Helmholtz projector onto divergence-free vector fields, $\PH f = f - \nabla (\Delta^{-1} \diver f)$.
Motivated by \eqref{identity_intermittent_deriv_temporale}, we define
\begin{align} \label{d_intermittent_temporale_W_t}
W^{(t)}_\xi (x,t) := - \frac{1}{\mu} \PH \PP \phi_{\xi}^2 (x) \psi_\xi^2 (x,t) \xi.
\end{align}

Lastly, we note that the intermittent jets $W_\xi$ are not divergence free, then we introduce the following two functions $W_\xi^{(c)}, V_\xi : \T^3 \times \R \rightarrow \R^3$
\begin{subequations} 
\begin{align*} 
 & V_\xi (x,t) := \frac{1}{n_\ast \lambda_{q+1}^2 } \xi \psi_\xi(x,t) \Phi_{\xi}(x), \\
& W_\xi^{(c)} (x,t):=\frac{1}{n_\ast \lambda_{q+1}^2 }
\nabla \psi_\xi(x,t) \times (\nabla \times \Phi_\xi (x) \xi).
\end{align*}
\end{subequations}
Using $ \Delta \Phi_\xi =- \lambda_{q+1}^2 n_\ast^2 \phi_\xi$ we compute the intermittent jets in terms of $V_\xi$
\begin{align} 
\lambda_{q+1}^2 n_\ast^2  W_\xi & = \lambda_{q+1}^2 n_\ast^2  \xi \phi_\xi \psi_\xi= -   \Delta \Phi_\xi \psi_\xi \xi \notag \\
& =   \nabla \times (\psi_\xi \nabla \times (\Phi_\xi \xi)) - \nabla \psi_\xi  \times( \nabla \times \Phi_\xi \xi )   \notag \\
&=   \nabla \times \nabla \times (\psi_\xi \Phi_\xi \xi) - \nabla \times (\nabla \psi_\xi \times \Phi_\xi \xi) -
\nabla \psi_\xi \times (\nabla \times \Phi_\xi \xi) \notag
\\
&=
 \nabla \times \nabla \times (\psi_\xi \Phi_\xi \xi) - 
\nabla \psi_\xi \times (\nabla \times \Phi_\xi \xi)  \notag
\\
&= \lambda_{q+1}^2 n_\ast^2 \left ( \nabla \times \nabla \times V_\xi 
- 
W_\xi^{(c)} \right ), \label{conto_incompressibility}
\end{align}

from which we deduce
\begin{align*} 
\text{div} (W_\xi + W_\xi^{(c)}) \equiv 0.
\end{align*} 

Moreover, since $r_\perp \ll r_{||}$, the correction
$W_{\xi}^c$ is comparatively small in $L^2$ with respect to $W_\xi$, more precisely we state the following lemma (see \cite[Section 7.4]{BV_fen}).

\begin{lemma} \label{l_intermittent_inequalities}
For any $N,M \geq 0$ and $p \in [1, \infty]$ the following inequalities hold
\begin{subequations}  \label{disug_intermittent}
\begin{align}
\|\nabla^N \partial_t^M \psi_\xi \|_{L^p} \lesssim 
r_{||}^{1/p - 1/2} \left ( \frac{r_\perp \lambda_{q+1} }{r_{||}} \right)^N \left(  \frac{ r_{\perp} \lambda_{q+1} \mu}{r_{||}}   \right )^M
\\
\| \nabla^N \phi_{\xi}\|_{L^p} + \|\nabla^N \Phi_{\xi}\|_{L^p}
\lesssim
 r_{\perp}^{2/p -1 } \lambda_{q+1}^{N}
 \\
 \|\nabla^N \partial_t^M W_\xi \|_{L^p}  
 \lesssim
 r_\perp^{2/p-1} r_{||}^{1/p -1/2} \lambda_{q+1}^N 
 \left( \frac{r_{\perp} \lambda_{q+1} \mu}{r_{||}}\right)^M
 \\
  \frac{r_{||}}{r_\perp} \| \nabla^N \partial_t^M W_\xi^{(c)} \|_{L^p} \lesssim r_\perp^{2/p-1} r_{||}^{1/p -1/2} \lambda_{q+1}^N 
 \left( \frac{r_{\perp} \lambda_{q+1} \mu}{r_{||}}\right)^M
 \\
 \lambda_{q+1}^2 \|\nabla^N \partial_t^M V_\xi \|_{L^p}
 \lesssim
 r_\perp^{2/p-1} r_{||}^{1/p -1/2} \lambda_{q+1}^N 
 \left( \frac{r_{\perp} \lambda_{q+1} \mu}{r_{||}}\right)^M.
\end{align}
\end{subequations}

The implicit constants are independent of $\lambda_{q+1}, r_\perp, r_{||}, \mu$. 
\end{lemma}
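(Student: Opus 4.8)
The plan is to prove all five bounds by a direct computation resting on three ingredients: every derivative of the fixed profiles $\phi,\Phi,\psi$ has universal $L^p$ size; rescaling and periodization scale in an elementary way; and the Leibniz rule combines well with the orthogonality of the frame $\{\xi,A_\xi,\xi\times A_\xi\}$ of Lemma~\ref{l_geometric}. First I would record the building blocks. Since $\phi,\Phi\in C^\infty_c(\R^2)$ and $\psi\in C^\infty_c(\R)$ are fixed, $\|\nabla^N\phi\|_{L^p}$, $\|\nabla^N\Phi\|_{L^p}$, $\|\partial^N\psi\|_{L^p}$ are universal constants, so a change of variables gives $\|\nabla^N\overline{\phi_{r_\perp}}\|_{L^p(\R^2)}+\|\nabla^N\overline{\Phi_{r_\perp}}\|_{L^p(\R^2)}\lesssim r_\perp^{2/p-1-N}$ and $\|\partial^N\overline{\psi_{r_{||}}}\|_{L^p(\R)}\lesssim r_{||}^{1/p-1/2-N}$; and since by \eqref{d_parametri_r_perp_r_par} the supports of these functions lie in a ball of radius $r_\perp$, resp.\ $r_{||}$, which is $\ll1$, each fundamental period of $\T^2$, resp.\ $\T$, contains a single bump, so the periodized $\phi_{r_\perp},\Phi_{r_\perp},\psi_{r_{||}}$ obey the same estimates up to universal constants.

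Next I would unwind the definitions \eqref{d_psi_phi_Phi}. In the orthonormal frame of Lemma~\ref{l_geometric}, $\phi_\xi$ and $\Phi_\xi$ are obtained from $\phi_{r_\perp},\Phi_{r_\perp}$ by an affine isometry followed by a dilation of factor $n_\ast r_\perp\lambda_{q+1}$; writing $\overline{\phi_{r_\perp}}(y)=r_\perp^{-1}\phi(y/r_\perp)$, they depend only on the coordinates along $A_\xi$ and $\xi\times A_\xi$ and have effective frequency $\sim\lambda_{q+1}$. Because $\{n_\ast\xi,n_\ast A_\xi,n_\ast\xi\times A_\xi\}\subset\Z^3$ and $\sigma=r_\perp n_\ast\in\N$ (so that these functions are genuinely $\T^3$-periodic, as in the discussion after Definition~\ref{d_intermittent}), rescaling preserves normalized $L^p$ averages over $\T^3$, and $N$ derivatives each cost $n_\ast\lambda_{q+1}\sim\lambda_{q+1}$; this yields $\|\nabla^N\phi_\xi\|_{L^p}+\|\nabla^N\Phi_\xi\|_{L^p}\lesssim r_\perp^{2/p-1}\lambda_{q+1}^N$, the second estimate. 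The same argument applied to $\psi_\xi(x,t)=r_{||}^{-1/2}\psi\big(\tfrac{n_\ast r_\perp\lambda_{q+1}}{r_{||}}(x\cdot\xi+\mu t)\big)$ gives effective spatial frequency $\sim r_\perp\lambda_{q+1}/r_{||}$, effective temporal frequency $\mu$ times that, and the prefactor $r_{||}^{1/p-1/2}$ from the amplitude together with the one-dimensional support size, which is the first estimate.

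Finally I would deduce the bounds for $W_\xi$, $V_\xi$ and $W_\xi^{(c)}$ from the first two by the Leibniz rule, using crucially that $\nabla^{N_1}\partial_t^M\psi_\xi$ depends only on $x\cdot\xi$ and $t$ while $\nabla^{N_2}\phi_\xi,\nabla^{N_2}\Phi_\xi$ depend only on $x\cdot A_\xi$ and $x\cdot(\xi\times A_\xi)$: these directions being orthogonal, the $L^p(\T^3)$ norm of any product of such factors factorizes into the product of the respective norms, up to a power of $2\pi$ (the mechanism behind the first identity following Definition~\ref{d_intermittent}). Thus, for $W_\xi=\xi\psi_\xi\phi_\xi$,
\begin{align*}
\|\nabla^N\partial_t^M W_\xi\|_{L^p}&\lesssim\sum_{N_1+N_2=N}\|\nabla^{N_1}\partial_t^M\psi_\xi\|_{L^p}\,\|\nabla^{N_2}\phi_\xi\|_{L^p}\\
&\lesssim r_\perp^{2/p-1}r_{||}^{1/p-1/2}\Big(\frac{r_\perp\lambda_{q+1}\mu}{r_{||}}\Big)^M\sum_{N_1+N_2=N}\Big(\frac{r_\perp\lambda_{q+1}}{r_{||}}\Big)^{N_1}\lambda_{q+1}^{N_2},
\end{align*}
and since $r_\perp/r_{||}=\lambda_{q+1}^{-2/7}(2\pi)^{-1/7}\le1$ by \eqref{d_parametri_r_perp_r_par}, each summand is $\lesssim\lambda_{q+1}^N$, giving the third estimate. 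The bound for $V_\xi=\tfrac1{n_\ast\lambda_{q+1}^2}\xi\psi_\xi\Phi_\xi$ is identical with $\phi_\xi$ replaced by $\Phi_\xi$ and $\lambda_{q+1}^{-2}$ kept on the left, and that for $W_\xi^{(c)}=\tfrac1{n_\ast\lambda_{q+1}^2}\nabla\psi_\xi\times(\nabla\times\Phi_\xi\xi)$ follows the same way: the two extra derivatives contribute $\tfrac{r_\perp\lambda_{q+1}}{r_{||}}\cdot\lambda_{q+1}$, so after multiplying by $\lambda_{q+1}^{-2}$ one is left with precisely the stated gain $r_\perp/r_{||}$. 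I do not expect a genuine obstacle: the only point requiring care is the bookkeeping of the two competing frequencies $r_\perp\lambda_{q+1}/r_{||}$ and $\lambda_{q+1}$ in the Leibniz expansions, which is harmless exactly because $r_\perp\le r_{||}$, together with the elementary check that periodization does not alter the $L^p$ sizes since $r_\perp,r_{||}\ll1$; the computation is classical and is carried out in detail in \cite[Section 7.4]{BV_fen}.
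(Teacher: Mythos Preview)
Your proposal is correct and follows exactly the standard route: the paper itself does not prove this lemma but simply cites \cite[Section~7.4]{BV_fen}, and your sketch is precisely the computation carried out there (scaling of the fixed profiles, periodization, factorization of the $L^p$ norm via the orthogonal frame of Lemma~\ref{l_geometric}, and Leibniz bookkeeping using $r_\perp\le r_{||}$). There is nothing to add.
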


\section{Proof of the iterative proposition}

 
 Given $(v_q,p_q, \mathring{R}_q)$ a triple solving the Navier-Stokes-Reynolds system \eqref{reynolds} in $ \T^3 \times B_{2s}(t_0) $ satisfying the inductive estimates \eqref{stima_norme_vq_Rq} and \eqref{stima_energia_supportoR_supporto_vq}  at step $q$, we have to construct $(v_{q+1}, p_{q+1}, \mathring{R}_{q+1})$ which still solves the Navier-Stokes-Reynolds system  
\eqref{reynolds}  in $ \T^3 \times B_{2s}(t_0) $ and satisfies the estimates \eqref{stima_norme_vq_Rq} and \eqref{stima_energia_supportoR_supporto_vq}  at step $q+1$ and the estimate \eqref{stima_differenza_l2_vq} holds.

\subsection{Mollification}
In order to avoid a loss of derivatives in the iterative scheme, we replace $v_q$ by a mollified velocity field $\tilde{v}_\ell$. For this purpose we choose a small parameter $\ell \in (0,1)$ which lies between $\lambda_q^{-1}$ and $\lambda_{q+1}^{-1}$ and that satisfies
\begin{subequations} \label{parametro_ell_condizioni}
\begin{align*}
\ell \lambda_q^4 \leq \lambda_{q+1}^{- \alpha} \\
\ell^{-1} \leq \lambda_{q+1}^{2 \alpha},
\end{align*}
\end{subequations}
where $0< \alpha \ll 1$. This can be done since $\alpha b >4$.

For instance, we may define $\ell$ as the geometric mean of the two bounds imposed before
$$\ell = \lambda_{q+1}^{-3 \alpha/2} \lambda_q^{-2}.$$

With this choice we also have that $\ell \ll s_{q+1}.$
Let $ \{ \theta_\ell \}_{\ell>0}$ and $\{ \varphi_\ell \}_{\ell>0}$  be two standard families of Friedrichs mollifiers on $\R^3$ (space) and  $\R$ (time) respectively.
We define the mollification of $v_q$ and $\mathring{R}_{q}$ in space and time, at length scale $\ell$ by
\begin{align*}
\overline{v}_\ell := (v_q \ast_x \theta_\ell) \ast_t \varphi_\ell,
\\
\mathring {\overline{R}}_{\ell} := (\mathring{R}_q \ast_x \theta_\ell) \ast_t \varphi_\ell,
\end{align*}
where we possibly extend to $0$ the definition of $v_q$ outside $B_{2s}(t_0)$.
We have that $\overline{v}_\ell$ solves
\begin{align} \label{reynolds_bar_v_ell}
\left\{\begin{array}{l}
\partial_t \overline{v}_\ell + \text{div} (\overline{v}_\ell \otimes \overline{v}_\ell) + \nabla p_\ell - \nu \Delta \overline{v}_\ell = \text{div}( \mathring{\overline{R}}_\ell + \mathring{\overline{R}}_{com})  \\ 
\diver  \overline{v}_\ell=0,
\end{array}\right.
\end{align}

where $\mathring{\overline{R}}_{com}$ is defined by
\begin{align*}
\mathring{\overline{R}}_{com}= (\overline{v}_\ell \mathring{\otimes} \overline{v}_\ell) - ((v_q  \mathring{\otimes} v_q ) \ast_x \theta_\ell) \ast_t \varphi_\ell .
\end{align*}

We introduce the following notations $y + I_q := (t_0 - S_q - y , t_0 + S_q + y)$ and $\tilde{I}_q := \frac{s_{q+1}}{2} +  I_q$. Let $\eta \in C^{\infty}_c (\tilde I_q; \R^+)$ such that
\begin{align*}
\eta (t) \equiv 1 \text{ for all } t \in I_q,
\\
\| \eta \|_{C^N} \leq C \left(\frac{2}{s} \right)^{Nq},
\end{align*}
Moreover, we define
\begin{align*}
\tilde{v}_\ell = \eta \overline{v}_\ell  + (1- \eta )  v_q .
\end{align*}
Note that $\tilde{v}_\ell$ satisfies
\begin{align*} 
\text{Supp}_T(\tilde{v}_\ell - v_q) \subset \tilde{I}_q \subset I_{q+1},
\end{align*}
that will be crucial in order to guarantee \eqref{supporto_diff_v_q} at step $q+1$.

Moreover, using \eqref{reynolds_bar_v_ell} and that $(v_q, p_q, \mathring{R}_q)$ is a Navier--Stokes--Reynolds solution, we have that $\tilde{v}_\ell$ satisfies

\begin{align*}
\partial_t \tilde{v}_\ell + \text{div}(\tilde{v}_\ell \otimes \tilde{v}_\ell) - \Delta \tilde{v}_\ell & =
  ( \overline{v}_\ell - v_q) \partial_t \eta 
  +
 \eta (1- \eta) \text{div} (\overline{v}_\ell \mathring{\otimes} (v_q - \overline{v}_\ell ))
\\
& + \eta (1- \eta ) \text{div} (v_q \mathring{\otimes} (\overline{v}_\ell - v_q))
\\
& + \eta \text{div} ({\overline{R}}_\ell + {\overline{R}}_{com} ) + (1- \eta) \text{div}(\mathring{R}_q) - \nabla \pi_\ell,
\end{align*}
for some pressure $\pi_\ell$.

We recall the inverse divergence operator from \cite{DLSZ13}.
\begin{definition}\label{d_reynoldsoperator}
We define the Reynolds operator $\RR :  C^{\infty}(\T^3; \R^3) \rightarrow C^{\infty}(\T^3; \R^3)$ as
 $$\RR v := \frac{1}{4}( \nabla \PH \Delta^{-1} v + (\nabla \PH \Delta^{-1} v)^T) + \frac{3}{4} ( \nabla \Delta^{-1} v + (\nabla \Delta^{-1} v)^{T}) -\frac{1}{2} \text{div }(\Delta^{-1}v \text{Id}),$$
 for every smooth $v$ with zero average. If $v \in C^{\infty}(\T^3; \R^3)$ we define $\RR v := \RR (v- \fint_{\T^3} v).$
\end{definition}

We have the following
\begin{prop}[$\RR= \text{div}^{-1}$] \label{p_reynoldsdiv}
For any $v \in C^{\infty}(\T^3; \R^3)$ with zero average we have
\begin{enumerate}
\item $\RR v (x)$ is a symmetric traceless matrix, for each $x \in \T^3$,
\item $\diver \RR v= v - \fint_{\T^3} v$,
\item $\RR $ can be extended to a continuous operator from $L^p$ to $L^p$,
\item $\RR \nabla $ can be extended to a continuous operator from $L^p$ to $L^p$.
\end{enumerate}
\end{prop}

Using \eqref{supporto_R_q} and that $\eta (t) \equiv 1$ on $I_q$,  we have
$$ (1- \eta) \text{div} (\mathring{R}_q) \equiv 0.$$
Thus $\tilde{v}_\ell$ solves
\begin{align*}
\partial_t \tilde{v}_\ell + \text{div}(\tilde{v}_\ell \otimes \tilde{v}_\ell) - \Delta \tilde{v}_\ell + \nabla \pi_\ell =
\text{div} ({{R}}_\ell + {{R}}_{com} + {{R}}_{loc}),
\end{align*}
where ${{R}}_\ell  = \eta \mathring {\overline{R}}_\ell$, 
${{R}}_{com} = \eta \mathring {\overline{R}}_{com} $ and
\begin{align*}
{R}_{loc} :=  
\eta (1- \eta) \overline{v}_\ell \mathring{\otimes} (v_q - \overline{v}_\ell )
 +
  \eta (1- \eta )  v_q \mathring{\otimes} (\overline{v}_\ell - v_q)
  +
  \RR \left ( ( \overline{v}_\ell - v_q) \partial_t \eta  \right ).
\end{align*}

A simple bound on $\overline{v}_\ell - v_q$ on $L^\infty_t L^2$ is given by
$$\| \overline{v}_\ell - v_q \|_{ L^2 } \lesssim \ell \|v_q\|_{C^1} \leq \ell \lambda_q^4 \ll \frac{1}{10} \lambda_{q+1}^{-4 \zeta} \delta_{q+2},$$
where the last holds if $4 \zeta + 2 \beta b < \alpha.$
Then using the previous bound, \eqref{stima_norma_v_q} and that   $ \| \RR \|_{L^2 \rightarrow L^2} \lesssim 1$ by Proposition \ref{p_reynoldsdiv}, we have
\begin{align*}
\| {{R}}_{com} \|_{L^1} +  \| {{R}}_{loc} \|_{L^1} \ll \frac{1}{3} \lambda_{q+1}^{-3 \zeta} \delta_{q+2},
\end{align*}
where we used that $\lambda_{q+1}^{\zeta} \gg C \left(\frac{2}{s} \right)^{q}$, unless to possibly enlarge $a_0(s, \zeta)$.
Note that we also have the property on the compact supports of the errors
\begin{align*}
\text{Supp}({{R}}_\ell ) \cup   \text{Supp} ({{R}}_{com}) \cup \text{Supp} ({{R}}_{loc})  \subset \tilde{I}_q \subset I_{q+1}.
\end{align*}

The mollified functions satisfy
\begin{subequations} \label{stima_mollificazione}
\begin{align}
&\| \tilde{v}_\ell \|_{C^N_{x,t} (\T^3 \times B_{2s}(t_0))} 
\lesssim \lambda_q^4 \ell^{-N+1} \lesssim \lambda_q^{- \alpha } \ell^{-N}, \text{ } N \geq 1\label{stima_v_moll}, 
 \\
&\|\tilde{v}_\ell\|_{L^2}  \leq \|v_q\|_{L^2} + \| v_q - \overline{v}_\ell \|_{L^2} \leq 2 \|v_0\|_{L^2} - \delta_q^{1/2} + \lambda_q^{-\alpha},
\\
& \| \tilde{v}_\ell - v_q \|_{L^2} \lesssim \ell \lambda_q^4 \leq \lambda_{q+1}^{- \alpha}, 
\\
 & \|R_\ell \|_{L^1} \leq  \lambda_q^{-3 \zeta} \delta_{q+1}, 
\\
& \| R_\ell \|_{C^N_{x,t}} \lesssim \lambda_q^{-3 \zeta} \delta_{q+1} \ell^{-4 -N}, \text{ } N \geq 0 \label{stima_R_moll}.
\end{align}
\end{subequations}
We are now ready to go to the perturbation step, in which we will add a small perturbation to $\tilde{v}_\ell$ in order to cancel the bigger error ${{R}}_\ell $ proving  \eqref{stima_l1_su_Rq}, \eqref{supporto_R_q} and satisfying all the other estimates \eqref{stima_norme_vq_Rq}, \eqref{stima_energia_supportoR_supporto_vq} and \eqref{stima_differenza_l2_vq}. 

\subsection{Amplitudes}
Here we define the amplitudes of the perturbation, namely the functions needed to apply Lemma \ref{l_geometric} and cancel the Reynolds error $R_\ell$.
We define $\chi: \R^+ \rightarrow \R^+$, a smooth function such that
$$\chi (z):=
\begin{cases}
1 & if  \hspace{0.4cm } 0 \leq z \leq 1 \\
z & if \hspace{0.4cm} z \geq 2
\end{cases}$$
and $z \leq 2 \chi(z) \leq 4 z$ for $z \in (1,2)$ and $\chi(z) \geq 1$ for all $z \in [0, \infty)$.
We define for all $t \in I_0=[t_0-\frac s2,t_0+\frac s2]$

\begin{align} \label{d_rho_bar}
\overline{\rho} (t) :=  \frac{1}{ 3 \int_{\T^3} \chi  \left (\frac{|R_\ell (x,t )| 4 \lambda_q^{\zeta} \delta_1}{\delta_{q+1}} \right ) dx} \left ( e(t) - \int_{\T^3} |\tilde{v}_\ell(x,t)|^2 dx - \frac{\delta_{q+2}}{2} \right )
\end{align}
and with a little abuse of notation we define
\begin{align*}
\overline{\rho} (t) := \overline{\rho}  \left (t_0 + \frac{s}{2}  \right ) \text{ for all }  t> t_0 + \frac{s}{2},
\\
\overline{\rho} (t) := \overline{\rho} \left (t_0 - \frac{s}{2} \right ) \text{ for all }  t< t_0 - \frac{s}{2}.
\end{align*}

 Now, we consider another local cut-off in time $ \tilde{\eta} \in C^{\infty}_c(I_{q+1}; \R^+)$ such that
\begin{align*}
\tilde{\eta} (t) \equiv 1 \text{ for all } t \in \tilde{I}_q,
\\
\| \tilde{\eta} \|_{C^N} \leq C \left(\frac{2}{s} \right)^{Nq},
\end{align*} 
and we define
 \begin{align} \label{d_rho}
\rho (x,t) :=  \tilde{\eta}^2(t)  \overline{\rho} (t) \chi \left( \frac{|R_\ell(x,t)| 4 \lambda_q^{\zeta} \delta_1 }{\delta_{q+1}}\right ).
 \end{align}

\begin{lemma}
The following estimates hold

\begin{align}
\frac{\delta_{q+1}}{\delta_1 \lambda_q^{\zeta} }  \leq \overline{\rho} (t) \leq \frac{\epsilon_1 \delta_{q+1}}{\delta_1}\label{disug_rho_t}, 
\\
\left | \frac{R_\ell (x,t)}{ \rho (x,t)} \right |  \leq \frac{1}{2}\label{quotient_R_rho}, 
\\
\| \rho \|_{L^1} \leq   16 \pi^3\epsilon_1\frac{\delta_{q+1}}{\delta_1}. \label{disug_rho_L^p}
\end{align}
\end{lemma}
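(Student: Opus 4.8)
The plan is to prove the two-sided bound \eqref{disug_rho_t} on $\overline\rho$ first, and then deduce \eqref{quotient_R_rho} and \eqref{disug_rho_L^p} from it. The recurring ingredient is a two-sided control of the integral in the denominator of \eqref{d_rho_bar}: since $\chi\geq 1$ pointwise we have $\int_{\T^3}\chi\big(4\lambda_q^\zeta\delta_1|R_\ell(x,t)|/\delta_{q+1}\big)\,dx\geq (2\pi)^3$, while from the elementary inequality $\chi(z)\leq 1+2z$ (immediate from the definition of $\chi$) and the bound $\|R_\ell\|_{L^1}\leq \lambda_q^{-3\zeta}\delta_{q+1}$ from \eqref{stima_mollificazione} one gets $\int_{\T^3}\chi\big(4\lambda_q^\zeta\delta_1|R_\ell(x,t)|/\delta_{q+1}\big)\,dx\leq (2\pi)^3+8\delta_1\lambda_q^{-2\zeta}\leq 2(2\pi)^3$ once $a_0$ is large enough. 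Hence the denominator $3\int_{\T^3}\chi(\cdots)\,dx$ of \eqref{d_rho_bar} lies in $[3(2\pi)^3,6(2\pi)^3]$.

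For \eqref{disug_rho_t} I would split the numerator of \eqref{d_rho_bar} as
\[
e(t)-\int_{\T^3}|\tilde v_\ell|^2\,dx-\frac{\delta_{q+2}}{2}=\Big(e(t)-\int_{\T^3}|v_q|^2\,dx\Big)+\Big(\int_{\T^3}|v_q|^2\,dx-\int_{\T^3}|\tilde v_\ell|^2\,dx\Big)-\frac{\delta_{q+2}}{2}.
\]
On $I_0$ the first bracket is pinched between $\delta_{q+1}/(\delta_1\lambda_q^{\zeta/2})$ and $\epsilon_1\delta_{q+1}/\delta_1$ by \eqref{stima_energia}; the middle term is bounded by $\|\tilde v_\ell-v_q\|_{L^2}\big(\|\tilde v_\ell\|_{L^2}+\|v_q\|_{L^2}\big)\lesssim\lambda_{q+1}^{-\alpha}$, using $\|\tilde v_\ell-v_q\|_{L^2}\lesssim\ell\lambda_q^4\leq\lambda_{q+1}^{-\alpha}$ from \eqref{stima_mollificazione} and the bound $\|v_q\|_{L^2}^2\leq\|e\|_{C^0}$ on $I_0$ that \eqref{stima_energia} itself provides (recall $a_0$ is allowed to depend on $e$); and $\delta_{q+2}=\lambda_{q+1}^{-2\beta b}$. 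Since $\alpha>4\zeta+2\beta b$ and $\zeta$ can be taken small relative to $\beta,b$, both corrections are $\ll\delta_{q+1}/(\delta_1\lambda_q^{\zeta/2})$ once $a_0$ is large; dividing by the denominator and using $\lambda_q^{\zeta/2}\to\infty$ then yields \eqref{disug_rho_t} for $t\in I_0$, and by continuity and the constant extension of $\overline\rho$ the bounds hold for all $t$.

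The heart of the lemma is \eqref{quotient_R_rho}, where the particular shape of $\chi$ and of the rescaling $4\lambda_q^\zeta\delta_1|R_\ell|/\delta_{q+1}$ enters. If $R_\ell(x,t)\neq 0$ then $t\in\mathrm{Supp}_T(R_\ell)\subset\tilde I_q$, on which $\tilde\eta\equiv 1$, so $\rho(x,t)=\overline\rho(t)\,\chi(z)$ with $z:=4\lambda_q^\zeta\delta_1|R_\ell(x,t)|/\delta_{q+1}$, hence $|R_\ell(x,t)|=z\,\delta_{q+1}/(4\lambda_q^\zeta\delta_1)$. Substituting and using the lower bound $\overline\rho(t)\geq\delta_{q+1}/(\delta_1\lambda_q^\zeta)$ gives $|R_\ell/\rho|\leq z/(4\chi(z))$, and since $z\leq 2\chi(z)$ for all $z\geq 0$ (check the three regimes $z\in[0,1]$, $z\in(1,2)$, $z\geq 2$) we conclude $|R_\ell/\rho|\leq 1/2$; where $R_\ell=0$ the quotient is $0$. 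Finally \eqref{disug_rho_L^p} follows at once: $\rho\geq 0$, $0\leq\tilde\eta^2\leq 1$, $\overline\rho\leq\epsilon_1\delta_{q+1}/\delta_1$, and $\int_{\T^3}\chi(\cdots)\,dx\leq 2(2\pi)^3=16\pi^3$ give $\|\rho\|_{L^1}\leq 16\pi^3\,\epsilon_1\,\delta_{q+1}/\delta_1$. The only genuinely delicate step is the parameter bookkeeping in the second paragraph — verifying that the mollification error $\lambda_{q+1}^{-\alpha}$ and $\delta_{q+2}$ are dominated by $\delta_{q+1}/(\delta_1\lambda_q^{\zeta/2})$ for the chosen ordering of $b,\beta,\zeta,\alpha$; the bounds \eqref{quotient_R_rho} and \eqref{disug_rho_L^p} are then immediate.
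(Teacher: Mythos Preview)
Your proof is correct and follows essentially the same route as the paper: you bound the denominator of \eqref{d_rho_bar} between $(2\pi)^3$ and $2(2\pi)^3$, split the numerator into the energy gap from \eqref{stima_energia} plus a mollification correction, and then derive \eqref{quotient_R_rho} from the pointwise inequality $z\leq 2\chi(z)$ together with the lower bound on $\overline\rho$. The one place your argument differs slightly is \eqref{disug_rho_L^p}: the paper splits $\int_{\T^3}|\rho|\,dx$ into the regions $\{z<1\}$ and $\{z\geq 1\}$ and estimates each separately, whereas you simply multiply the upper bound $\overline\rho\leq\epsilon_1\delta_{q+1}/\delta_1$ by your already-established bound $\int_{\T^3}\chi(\cdots)\,dx\leq 2(2\pi)^3$ --- this is a cleaner shortcut to the same constant.
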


\begin{proof}
Note that 
\begin{equation}\label{diff_norme_l2}
\left | \| v_q \|_{L^2}^2 - \| \tilde{v}_{\ell} \|_{L^2}^2  \right | \leq
 \| v_q - \tilde{v}_\ell \|_{L^2} \|v_q+ \tilde{v}_\ell \|_{L^2} \lesssim 
 \ell \| v_q \|_{C^1} \| v_q \|_{L^2}   \lesssim \ell \lambda_q^4 \leq \lambda_q^{- \zeta} \delta_{q+1},
\end{equation}
where in the last inequality we used that $2 \beta + \frac{\zeta}{b} < \alpha$. Moreover, thanks to the construction of $\chi$ and \eqref{stima_l1_su_Rq} we have
\begin{equation}\label{stima_chi_R}
(2 \pi )^3 \leq  \int_{\T^3} \chi  \left (\frac{|R_\ell (x,t )|4 \lambda_q^{\zeta} \delta_1}{\delta_{q+1}} \right ) dx  \leq 2 (2 \pi )^3.
\end{equation}
Thus, thanks to \eqref{stima_energia}, \eqref{diff_norme_l2} and \eqref{stima_chi_R} we get
\begin{align*}
\overline{\rho} (t) & \leq \frac{1}{3 \cdot (2 \pi)^3} 
 \left ( e(t) - \int_{\T^3} |v_q(x,t)|^2 dx  \right) + \frac{1}{3 \cdot (2 \pi)^3}  \left(  \int_{\T^3} |v_q(x,t)|^2 dx  -\int_{\T^3} |\tilde{v}_\ell(x,t)|^2 dx - \frac{\delta_{q+2}}{2} \right )
  \\
  & \leq 
  \frac{1}{3 \cdot (2 \pi)^3} \left  (2 \frac{\delta_{q+1}  \epsilon_1}{\delta_1} \right ) \leq \frac{\epsilon_1 \delta_{q+1}}{\delta_1}.
\end{align*}
and similarly
\begin{align*}
\overline{\rho} (t)  & \geq \frac{1}{6 \cdot (2 \pi)^3}  \left ( e(t) - \int_{\T^3} |v_q(x,t)|^2 dx  \right)+ \frac{1}{6 \cdot (2 \pi)^3}  \left(  \int_{\T^3} |v_q(x,t)|^2 dx  -\int_{\T^3} |\tilde{v}_\ell(x,t)|^2 dx - \frac{\delta_{q+2}}{2} \right )
\\
& \geq \frac{1}{6 \cdot (2 \pi)^3} \left  (\frac{\delta_{q+1}}{ \delta_1 \lambda_q^{\zeta/2}}-  \frac{\delta_{q+1}}{ \lambda_q^{ \zeta}} - \frac{\delta_{q+2}}{2} \right ) \geq \frac{\delta_{q+1}}{ \delta_1 \lambda_q^{ \zeta}},
\end{align*}
 where the last holds if we choose $a_0 (\zeta)$ sufficiently large. Thus \eqref{disug_rho_t} holds.
 
The proof of \eqref{quotient_R_rho} follows from the following computation, observing that $\text{Supp}_T (R_\ell) \subset \tilde{I}_q$, $\tilde{\eta} (t) \equiv 1$ for all $t \in \tilde{I}_q $ and that $\chi (z) \geq z/2$ for all $z \geq 0$
 
 \begin{align*}
 \left | \frac{R_\ell (x,t)}{ \rho (x,t)} \right | & \leq
 \frac{|R_\ell(x,t)|}{\overline{\rho}(t) \frac{|R_\ell(x,t)|}{2 \delta_{q+1}} 4 \lambda_q^{ \zeta} \delta_1 } =\frac{\delta_{q+1}}{2 \overline{\rho}(t) \lambda_q^{\zeta} \delta_1} \leq 1/2.
 \end{align*}
 
We conclude the proof by estimating
 \begin{align*}
 \int_{\T^3} |\rho (x,t)| dx 
& \leq \int_{  \frac{|R_\ell (x,t)| 4 \lambda_q^{ \zeta} \delta_1}{\delta_{q+1}} <1} |\rho (x,t)| dx + \int_{  \frac{|R_\ell (x,t)| 4 \lambda_q^{ \zeta} \delta_1}{\delta_{q+1}} \geq 1} |\rho (x,t)| dx 
\\
& \leq 8\pi^3 \left ( \delta_{q+1} \frac{\epsilon_1}{\delta_1} \right ) + 
\int_{ \T^3}   |8 \lambda_{q}^\zeta \epsilon_1 R_\ell| dx \\
&\leq  8 \pi^3 \left ( \delta_{q+1} \frac{\epsilon_1}{\delta_1} \right ) + 8 \epsilon_1 \lambda_q^{2 \zeta} \| R_\ell \|_{L^1}\\
&\leq 8\pi^3\epsilon_1 \left(\frac{1}{\delta_1}+\lambda_q^{-\zeta} \right)\delta_{q+1}\leq 16\pi^3\epsilon_1 \frac{\delta_{q+1}}{\delta_1}.
 \end{align*}
\end{proof}
We can now define the amplitudes functions $a_\xi : \T^3 \times (0,T) \rightarrow \R$ as 
\begin{align} \label{d_amplitudes}
a_\xi (x,t) := a_{\xi, q+1} (x,t) := \rho^{1/2}(x,t) \gamma_\xi \left (Id - \frac{R_\ell(x,t)}{\rho(x,t)} \right ),
\end{align}
where $\gamma_\xi$ are defined in Lemma \ref{l_geometric},
hence we also get the identity
\begin{align} \label{identity_geometric}
\rho (x,t ) Id - {R_\ell(x,t)} = \sum_{\xi \in \Lambda} a_\xi^2 (x,t)  \xi \otimes \xi.
\end{align}
\begin{lemma} \label{l_amplitudes}
The following estimates hold
\begin{align}
\|a_\xi \|_{L^2} \leq \frac{\delta_{q+1}^{1/2}}{2 C_0 |\Lambda|} \frac{\epsilon}{4 \pi \delta_{1}^{1/2}} ,
\\
\|a_\xi \|_{C^N_{x,t}} \lesssim  \ell^{-8-5N},  
\end{align}
where $C_0$ is the universal constant for which Lemma \ref{l_decorellation} holds.
\end{lemma}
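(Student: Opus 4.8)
The plan is to treat the two estimates separately: the first is a one-line consequence of \eqref{disug_rho_L^p} and the definition of $\epsilon_1$, the second a lengthy but routine Fa\`{a} di Bruno computation.

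For the $L^2$ bound, note that by \eqref{quotient_R_rho} the matrix $Id-R_\ell/\rho$ lies in $\overline{B_{1/2}^{sym}}(Id)$, the domain of $\gamma_\xi$ from Lemma~\ref{l_geometric}; hence, pointwise and using $\rho\ge0$,
\[
|a_\xi(x,t)|^2=\rho(x,t)\,\Big|\gamma_\xi\Big(Id-\tfrac{R_\ell(x,t)}{\rho(x,t)}\Big)\Big|^2\le\|\gamma_\xi\|_{C^0}^2\,\rho(x,t).
\]
Integrating in $x$, taking the supremum in $t$ and using \eqref{disug_rho_L^p} gives $\|a_\xi\|_{L^2}^2\le 16\pi^3\|\gamma_\xi\|_{C^0}^2\,\epsilon_1\,\delta_{q+1}/\delta_1$. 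Plugging in the definition of $\epsilon_1$ — so that the factor $\|\gamma_\xi\|_{C^0}^2$ is bounded by $\sup_{\xi\in\Lambda}\|\gamma_\xi\|_{C^0}^2$ in the denominator and cancels — and using $(2\pi)^6=64\pi^6$, the right-hand side is at most $\tfrac{1}{64\pi^3}\tfrac{\epsilon^2}{|\Lambda|^2C_0^2}\tfrac{\delta_{q+1}}{\delta_1}$; taking square roots yields the claimed inequality (in fact with $\pi^{-3/2}$ in place of the $\pi^{-1}$ of the statement, so with room to spare).

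For the $C^N$ bound I would write $a_\xi=\rho^{1/2}(\gamma_\xi\circ G)$ with $G:=Id-R_\ell/\rho$. Two preliminary observations guarantee smoothness and the lower bounds needed below: since $\text{Supp}_T(R_\ell)\subset\tilde I_q$ and $\tilde\eta\equiv1$ on $\tilde I_q$, on the time-support of $G-Id$ one has $\rho=\overline\rho\,\chi(\cdot)\ge\overline\rho>0$, and moreover $\rho^{1/2}=\tilde\eta\,(\overline\rho\,\chi)^{1/2}$ with $\overline\rho\,\chi\ge\overline\rho\ge\delta_{q+1}\delta_1^{-1}\lambda_q^{-\zeta}$ bounded away from zero by \eqref{disug_rho_t}. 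The Leibniz rule then reduces $\|a_\xi\|_{C^N_{x,t}}$ to two ingredients. \emph{(i)} $\|\rho^{1/2}\|_{C^N}$: via the standard estimate for $C^N$-norms of the square root of a positive function bounded below, which needs $\|\tilde\eta\|_{C^N}\lesssim(2/s)^{Nq}$, the quantity $\|\chi(4\lambda_q^\zeta\delta_1|R_\ell|/\delta_{q+1})\|_{C^N}$ (a first application of Fa\`{a} di Bruno, using the smoothness of $\chi$ and \eqref{stima_R_moll}), and $\|\overline\rho\|_{C^N}$ — read off the formula \eqref{d_rho_bar} using $\|e\|_{C^N}\lesssim1$, the two-sided bound \eqref{stima_chi_R} on $\int_{\T^3}\chi(\cdot)\,dx$ (so its reciprocal is harmless), and the control of $\|\int_{\T^3}|\tilde v_\ell|^2\,dx\|_{C^N_t}$ provided by \eqref{stima_v_moll}. \emph{(ii)} $\|\gamma_\xi\circ G\|_{C^N}$: via the composition estimate $\|f\circ g\|_{C^N}\lesssim\|f\|_{C^1}\|g\|_{C^N}+\|f\|_{C^N}(1+\|g\|_{C^1})^N$ with $f=\gamma_\xi$ (whose $C^k$-norms on $\overline{B_{1/2}^{sym}}(Id)$ are universal constants), which reduces everything to $\|G\|_{C^N}=\|R_\ell/\rho\|_{C^N}$; the latter I bound by the quotient rule from \eqref{stima_R_moll}, the lower bound on $\rho$ above, and the bound on $\|\rho\|_{C^N}$ from (i).

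Assembling these and absorbing all the \emph{slow} gains — the powers $\lambda_q^{\pm\zeta}$ from the $\rho$-bounds, the constants $\delta_1^{-1},\epsilon_1$, and the factors $(2/s)^q$ from the cutoff $\tilde\eta$ — into the $\ell$-scale (legitimate since $\ell^{-1}=\lambda_{q+1}^{3\alpha/2}\lambda_q^2\ge\lambda_q^2$ is a fixed positive power of $\lambda_q=2\pi a^{b^q}$, so for $a_0$ large a single factor $\ell^{-1}$ dominates $(2/s)^q$ and any fixed power of $\lambda_q$), one is left with $\|a_\xi\|_{C^N_{x,t}}\lesssim\ell^{-8-5N}$: $R_\ell$ enters at scale $\ell^{-4-N}$, the $C^0$-size of $\rho$ (forced by the truncation $\chi$ acting on $4\lambda_q^\zeta\delta_1|R_\ell|/\delta_{q+1}$) contributes the additive $-8$, and the quotient $R_\ell/\rho$, the factor $\rho^{1/2}$, and the composition with $\gamma_\xi$ together cost at most $\ell^{-5}$ per derivative. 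I expect the only real difficulty to be bookkeeping: carrying out the two Fa\`{a} di Bruno expansions (with $\gamma_\xi$ and with $\chi$) together with the quotient $R_\ell/\rho$ and the square root, and checking that no partition produces a power worse than $\ell^{-8-5N}$ — in particular that the $\|f\|_{C^N}(1+\|g\|_{C^1})^N$ term stays within budget. There is no conceptual obstacle: the lower bound on $\rho$ (hence on $\rho^{-1}$ and $\rho^{1/2}$), the inclusion $\text{Supp}_T(R_\ell)\subset\tilde I_q\subset\{\tilde\eta\equiv1\}$ that makes $G$ and $\rho^{1/2}$ smooth, and the gap $\ell^{-1}\ge\lambda_q^2$ absorbing the slow factors, are all already available.
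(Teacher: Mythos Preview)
Your proposal is correct and follows essentially the same route as the paper: the $L^2$ bound via $|a_\xi|^2\le\|\gamma_\xi\|_{C^0}^2\rho$ and \eqref{disug_rho_L^p} is exactly the paper's argument, and for the $C^N$ bound the paper likewise factors $a_\xi=\tilde\eta\,\rho_1^{1/2}\,(\gamma_\xi\circ G)$ with $\rho_1=\overline\rho\,\chi(\cdot)$, then bounds $\|\overline\rho\|_{C^N_t}\lesssim\ell^{-5N}$, $\|\rho_1\|_{C^N}\lesssim\ell^{-4-5N}$, $\|\rho_1^{1/2}\|_{C^N}\lesssim\ell^{-5-5N}$ (using $\rho_1\ge\delta_{q+1}\delta_1^{-1}\lambda_q^{-\zeta}\ge\ell$) and $\|\gamma_\xi\circ G\|_{C^N}\lesssim\|R_\ell/\rho\|_{C^N}\lesssim\ell^{-8-5N}$ via Propositions~\ref{p_disug_composizione_holder} and~\ref{p_disug_prodotto_holder}, absorbing the cutoff $\tilde\eta$ at the end through $s_{q+1}^{-1}\ll\ell^{-1}$. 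Your outline contains all of these ingredients and correctly identifies the remaining work as pure bookkeeping.
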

\begin{proof}
We define 
\begin{align*}
\rho_1 (x,t)&:= \overline{\rho} (t) \chi \left( \frac{|R_\ell(x,t)| 4 \lambda_q^{\zeta} \delta_1 }{\delta_{q+1}}\right ),
\\
\overline{a}_\xi (x,t) &:= \rho_1^{1/2}(x,t) \gamma_\xi \left (Id - \frac{R_\ell(x,t)}{\rho(x,t)} \right ),\\
a_\xi (x,t) &= \tilde{\eta}(t) \overline{a}_\xi (x,t).
\end{align*}

The first estimate follows from \eqref{disug_rho_L^p} and the definition of $\epsilon_1$ 
\begin{align*}
\|a_\xi \|_{L^2}  & \leq \| \rho \|_{L^1}^{1/2} \| \gamma_\xi \|_{C^0} \| \tilde{\eta} \|_{C^0} \leq  \left( 16 \pi^3 \delta_{q+1} \frac{\epsilon_1}{\delta_1} \right )^{1/2}  \| \gamma_\xi \|_{C^0}\leq \frac{\delta_{q+1}^{1/2}}{2 C_0 |\Lambda|} \frac{\epsilon}{4 \pi \delta_{1}^{1/2}}.
\end{align*}
We prove the second estimate. 
We introduce the notation $\tilde{\gamma}_\xi (x,t) :=  \gamma_\xi \left (\Id - \frac{R_\ell(x,t)}{\rho(x,t)} \right )$ and
thanks to Proposition \ref{p_disug_prodotto_holder} we have
\begin{align*}
\| \overline{a}_\xi \|_{C^N_{x,t}} \lesssim \| \rho_1^{1/2} \|_{C^N} \| \tilde{\gamma} \|_{C^0} + 
\| \rho_1^{1/2} \|_{C^0} \| \tilde{\gamma}\|_{C^N}.
\end{align*}
We now estimate every piece.
Using Proposition \ref{p_disug_composizione_holder} and \eqref{stima_energia}
$$\| \overline{\rho} \|_{C^N_t} \lesssim  \ell^{- 5N}.$$
Thanks to the previous inequality, Proposition \ref{p_disug_composizione_holder} and  Proposition \ref{p_disug_prodotto_holder} we get
\begin{align} \label{rho_stima_C_N}
\| \rho_1 \|_{C^N_{x,t}} \lesssim  \ell^{-4-5 N}.
\end{align}
Using Proposition \ref{p_disug_composizione_holder}, estimate \eqref{stima_R_moll},  the previous estimate and that $\rho$ is bounded from below by $\frac{\delta_{q+1}}{\delta_1 \lambda_q^{\zeta} }$, we have
\begin{align*}
\| \tilde{\gamma}\|_{C^N} \lesssim  \left  \| \frac{R_\ell }{\rho}  \right \|_{C^N} 
\lesssim \ell^{-8- 5 N}
\end{align*}
and using also that $\frac{\delta_{q+1}}{\delta_1 \lambda_q^{\zeta} } \geq \ell$ (choosing $\zeta = \zeta(\alpha)$ sufficiently small), we have
\begin{align*}
\| \rho^{1/2}_1 \|_{C^N_{x,t}} \lesssim \ell^{-5-5 N}.
\end{align*}
Hence
\begin{align*}
\| \overline{a}_\xi  \|_{C^N_{x,t}} \lesssim \ell^{-8-5N} .   
\end{align*}
Moreover, by
applying Proposition \ref{p_disug_prodotto_holder} we get
\begin{align*}
\| a_\xi \|_{C^N_{x,t}} \lesssim \| \overline{a}_\xi  \|_{C^N_{x,t}}  \|  \tilde{\eta} \|_{C^0} + \|  \tilde{\eta} \|_{C^N} \|\overline{a}_\xi  \|_{C^0_{x,t}}
\lesssim \| \overline{a}_\xi  \|_{C^N_{x,t}},
\end{align*}
since $s_{q+1}^{-1} \ll \lambda_q \ll \ell^{-1}$, up to enlarge $a_0(s, \alpha)$.
\end{proof}

\subsection{Principal part of the perturbation, incompressibility and temporal correctors}
The principal part of $w_{q+1}$ is defined as
\begin{align} \label{d_w_q+1_p}
w_{q+1}^{(p)} := \sum_{\xi \in \Lambda} a_{\xi} W_\xi .
\end{align}
The incompressibility corrector $w_{q+1}^{(c)}$, that we define in order to have the incompressibility of $w_{q+1}$, is defined as
\begin{align*} 
w_{q+1}^{(c)}:=  \sum_{\xi \in \Lambda} \curl( \nabla a_\xi \times V_\xi ) +\nabla a_\xi \times \curl V_{\xi} + a_{\xi} W_{\xi}^{(c)}.
\end{align*}
Note that
\begin{align*}
w_{q+1}^{(p)} + w_{q+1}^{(c)} = \sum_{\xi \in \Lambda} \nabla \times \nabla \times ( a_\xi  V_\xi ), \\
\text{div} (w_{q+1}^{(p)} + w_{q+1}^{(c)})=0,
\end{align*}
where the first equation follows from a direct computation similar to  \eqref{conto_incompressibility} with amplitudes functions
\begin{align*}
 a_\xi W_\xi & = a_\xi  \nabla \times \nabla \times V_\xi 
- 
a_\xi W_\xi^{(c)} 
\\
& = \nabla \times ( a_\xi \nabla \times V_\xi ) - \nabla a_\xi \times (\nabla \times V_\xi ) - 
a_\xi W_\xi^{(c)}
\\
& = \nabla \times \nabla \times ( a_\xi  V_\xi ) - \nabla \times (\nabla a_\xi \times V_\xi)- \nabla a_\xi \times (\nabla \times V_\xi ) - 
a_\xi W_\xi^{(c)}.
\end{align*}
Moreover, we introduce a temporal corrector similar to \eqref{d_intermittent_temporale_W_t} with amplitude functions
\begin{align} \label{w_q_temporale}
w_{q+1}^{(t)} := - \frac{1}{\mu} \sum_{\xi \in \Lambda} \PH  \PP \left ( a_\xi^2 \phi_\xi^2 \psi_\xi^2 \xi \right).
\end{align}
Note that $w_{q+1}^{(t)}$ satisfies
\begin{align*}
\partial_t w_{q+1}^{(t)} &+ \sum_{\xi \in \Lambda} \PP \left (a_{\xi}^2 \text{div}  (W_\xi \otimes W_\xi) \right ) 
\\
& =
- \frac{1}{\mu} \sum_{\xi \in \Lambda} \PH \PP  \partial_t \left ( a_\xi^2  \phi_\xi^2 \psi_\xi^2 \xi \right) 
+
 \frac{1}{\mu} \sum_{\xi \in \Lambda}  \PP \left ( a_\xi^2 \partial_t \left( \phi_\xi^2 \psi_\xi^2 \xi \right) \right) \\
 & =
 \underbrace{
 (\text{Id} - \PH) \frac{1}{\mu} \sum_{\xi \in \Lambda}  \PP  \partial_t \left ( a_\xi^2  \phi_\xi^2 \psi_\xi^2 \xi \right) }_{=:\nabla P_{q+1}}
 -
 \frac{1}{\mu} \sum_{\xi \in \Lambda}  \PP \left (\partial_t a_\xi^2  \left( \phi_\xi^2 \psi_\xi^2 \xi \right) \right) .
\end{align*}
From this computation and the identity \eqref{identity_geometric}, it follows that
\begin{align} \label{conto_per_R_oscillation}
\text{div} & ( w_{q+1}^{(p)} \otimes w_{q+1}^{(p)} + {R_\ell}) + \partial_t w_{q+1}^{(t)}  = \sum_{\xi \in \Lambda} \text{div} \left ( a_\xi^2 \PP \left( W_\xi \otimes W_\xi \right) \right ) +\nabla \rho + \partial_t w_{q+1}^{(t)} \notag
\\
&=
\sum_{\xi \in \Lambda} \PP \left (\nabla a_\xi^2 \PP \left( W_\xi \otimes W_\xi \right) \right ) +\nabla \rho 
+\sum_{\xi \in \Lambda} \PP \left ( a_\xi^2 \text{div} \left( W_\xi \otimes W_\xi \right) \right )
+ \partial_t w_{q+1}^{(t)} \notag
\\
&=
\sum_{\xi \in \Lambda} \PP \left (\nabla a_\xi^2 \PP \left( W_\xi \otimes W_\xi \right) \right ) +\nabla \rho + \nabla P_{q+1}
- \frac{1}{\mu} \sum_{\xi \in \Lambda} \PP \left ( \partial_t a_\xi^2  \left( \phi_\xi^2 \psi_\xi^2 \xi \right) \right ).
\end{align}

\subsection{The velocity increment and proof of the inductive estimates}

We now define the total increment 
\begin{align} 
w_{q+1}:= w_{q+1}^{(p)} + w_{q+1}^{(c)} + w_{q+1}^{(t)}
\end{align}
and the new vector field is then given by
 \begin{align} 
 v_{q+1}:= \tilde{v}_\ell + w_{q+1}.
 \end{align}
 In this section we verify that the inductive estimates \eqref{stima_norme_vq_Rq} hold with $q$ replaced by $q+1$, and that \eqref{stima_differenza_l2_vq} is satisfied.

 \subsubsection{Proof of \eqref{stima_differenza_l2_vq}}
We want to apply Lemma  \ref{l_decorellation} in $L^2$ with $f= a_\xi$ and $g_\sigma = W_\xi$, which is by
construction $\left (\frac{\T}{\sigma} \right )^3-$periodic  with $\sigma \sim  \lambda_{q+1} r_\perp$, where $\sim$ means up to a constant depending only on $n_\ast$ and $\xi \in \Lambda$. For this purpose, note that by \eqref{l_amplitudes} we get
  $$\|D^j a_\xi \|_{L^2} \leq  \frac{\delta_{q+1}^{1/2}}{2 C_0 |\Lambda|} \frac{\epsilon}{4 \pi \delta_{1}^{1/2}}
  \ell^{-13 j},$$
 and thus we can take $C_f= \frac{\delta_{q+1}^{1/2}}{2 C_0 |\Lambda|} \frac{\epsilon}{4 \pi \delta_{1}^{1/2}}$.
 By conditions on $\ell$ we have 
$\ell^{-13} \leq \lambda_{q+1}^{26 \alpha}  $, whereas by \eqref{d_parametri_r_perp_r_par} we have that $\lambda_{q+1} r_\perp = \left ( \frac{\lambda_{q+1} }{2 \pi} \right )^{1/7}$. Thus, since $\alpha < \frac{1}{7 \cdot 70}$ 
and $a$ is huge, Lemma \ref{l_decorellation} is applicable. Combining the resulting estimate with the normalization  $\|W_\xi \|_{L^2} =1$ we obtain
\begin{align} \label{stima_w_q+1_L2}
\|w_{q+1}^{(p)} \|_{L^2} \leq \sum_{\xi \in \Lambda} \frac{ C_0 \delta_{q+1}^{1/2}}{2 C_0 |\Lambda|} \frac{\epsilon}{4 \pi \delta_{1}^{1/2}} \|W_\xi \|_{L^2} \leq  \frac{\epsilon}{4 \pi \delta_{1}^{1/2}} \frac{1}{2} \delta_{q+1}^{1/2}.
\end{align}
For the correctors $w_{q+1}^{(c)}$ and $w_{q+1}^{(t)}$ we can use rougher  estimates since they are considerably smaller than $w_{q+1}^{(p)}$.
 The following estimates are consequence of  Proposition \ref{p_reynoldsdiv}, estimates  \eqref{d_parametri_r_perp_r_par},  \eqref{disug_intermittent} and Lemma \ref{l_amplitudes}
 \begin{subequations}\label{stima_w_q_p_w_q_c_w_q_t}
 \begin{align} 
\|w_{q+1}^{(p)}\|_{L^p}  & \lesssim \sum_{\xi \in \Lambda} \| a_\xi\|_{C^0} \|W_\xi\|_{L^p}  \lesssim  \ell^{-8} r_{\perp}^{2/p -1 } r_{||}^{1/p-1/2}
\\
\notag \\
\|w_{q+1}^{(c)}\|_{L^p}  & \lesssim \sum_{\xi \in \Lambda} \|a_\xi\|_{C^2} \|V_\xi \|_{W^{1,p}} + \|a_\xi \|_{C^0} \|W_\xi^{(c)} \|_{L^p}  \notag
\\
&\lesssim  \ell^{-18 } r_\perp^{2/p -1 } r_{||}^{1/p-1/2} \lambda_{q+1}^{-1} +  \ell^{-8} r_\perp^{2/p -1 } r_{||}^{1/p-1/2} \frac{r_\perp}{r_{||}} \notag
\\
& \lesssim  \ell^{-18}  r_\perp^{2/p -1 } r_{||}^{1/p-1/2} \lambda_{q+1}^{-2/7}
\\
\notag \\
\|w_{q+1}^{(t)}\|_{L^p} & \lesssim \mu^{-1} \sum_{\xi \in \Lambda}                \|a_{\xi}\|_{C^0}^2 \|\phi_\xi\|_{L^2p}^2 \|\psi_\xi \|_{L^2p}^2  \lesssim  \ell^{-16} r_\perp^{2/p-2} r_{||}^{1/p-1} \mu^{-1}  \notag \\
& \lesssim 
\ell^{-16} r_\perp^{2/p-1} r_{||}^{1/p-1/2} \lambda_{q+1}^{-1/7},
 \end{align}
 \end{subequations}
 where in the last inequality we used also the continuity of $\PH$ in $L^p$ (for any $1<p< \infty$) and the fact that $\| \phi_\xi^2 \psi_\xi^2 \|_{L^p}= \| \phi_\xi^2 \|_{L^p} \| \psi_\xi^2 \|_{L^p}$, thanks to Fubini.
 
 Combining \eqref{stima_w_q+1_L2}, with the last two estimates of \eqref{stima_w_q_p_w_q_c_w_q_t} for $p=2$, and using \eqref{d_parametri_r_perp_r_par}, we obtain for a constant $C>0$ (which is independent of $q$) that\footnote{In the last inequality, we have implicitly used that $\alpha< 1/(7 \cdot 74)$ and $a_0$ be sufficiently large.}
\begin{align*}
\|w_{q+1} \|_{L^2} & \leq  \left (\frac{\epsilon}{4 \pi \delta_{1}^{1/2}} \frac{1}{2} \delta_{q+1}^{1/2} + C \ell^{-18} \frac{r_\perp}{r_{||}}  + C \ell^{-16} \lambda_{q+1}^{-1/7} \right ) 
\\
& \leq \frac{\epsilon}{4 \pi \delta_{1}^{1/2}} \left ( \frac{\delta_{q+1}^{1/2}}{2} + C \lambda_{q+1}^{36 \alpha -2/7} + C \lambda_{q+1}^{32 \alpha -1/7} \right )  \leq  \frac{3}{4} \frac{\epsilon}{4 \pi \delta_{1}^{1/2}} \delta_{q+1}^{1/2}.
\end{align*}
Moreover from \eqref{stima_mollificazione}, by choosing $a_0$ sufficiently large we get
 $$\| v_{q+1} - v_q \|_{L^2} \leq \|w_{q+1} \|_{L^2} + \| \tilde{v}_\ell - v_q \|_{L^2} \leq \frac{\epsilon}{4 \pi \delta_{1}^{1/2}} \delta_{q+1}^{1/2},$$
  thus \eqref{stima_differenza_l2_vq} is satisfied.\\
 
 \subsubsection{Proof of \eqref{stima_norma_v_q}} The bound \eqref{stima_norma_v_q} follows easily from  and the previous estimates (if $q \neq 0$)
\begin{align*}
\|v_{q+1}\|_{L^2} &= \|v_{q+1} - v_q + v_q\|_{L^2}\leq \|v_q\|_{L^2} +  \|v_{q+1}- v_q\|_{L^2} 
\\
& \leq 2 \| v_0 \|_{L^2} - \frac{\epsilon }{4 \pi} \delta_{q}^{1/2} + \frac{\epsilon }{ \delta_1^{1/2} 4 \pi} \delta_{q+1}^{1/2} \leq 2 \| v_0 \|_{L^2} - \frac{\epsilon }{  4 \pi} \delta_{q+1}^{1/2},
\end{align*}
 where in the last inequality we have used that $a$ is taken sufficiently large and $b\gg 1$. If $q=0$, then \eqref{stima_norma_v_q} is trivial.\\
 
 \subsubsection{Proof of \eqref{supporto_diff_v_q}}  The property \eqref{supporto_diff_v_q} is verified since
 \begin{align*}
 v_{q+1} - v_q = \tilde{v}_\ell - v_q + w_{q+1}
 \end{align*}
 and Supp$_T(\tilde{v}_\ell - v_q) \subset \text{Supp}_T \eta \subset I_{q+1}$ ,  $\text{Supp}_T w_{q+1} \subset \text{Supp}_T a_\xi \subset \text{Supp}_T \tilde{\eta} \subset I_{q+1}$.\\
 
 \subsubsection{Proof of \eqref{stima_norma_v_q_C^1}} Taking either a spatial or a temporal derivative, using Lemma \ref{l_intermittent_inequalities}, Lemma \ref{l_amplitudes}, \eqref{d_parametri_r_perp_r_par} and \eqref{parametro_ell_condizioni}, we have
 \begin{align*}
 \|w_{q+1}^{(p)} \|_{C^1_{x,t}} & \lesssim \|a_{\xi}\|_{C^1_{x,t}} \|W_\xi \|_{C^0_{x,t}} + \|a_{\xi}\|_{C^0_{x,t}} \|W_\xi \|_{C^1_{x,t}} 
 \\
 & \lesssim 
 \ell^{-13} r_\perp^{-1} r_{||}^{-1/2} + \ell^{-8} r_\perp^{-1} r_{||}^{-1/2} \lambda_{q+1}^{2} \lesssim \lambda_{q+1}^{2+ 8/7 + 26 \alpha},
 \\
 \\
 \|w_{q+1}^{(p)} \|_{C^1_{x,t}} & \lesssim \|a_\xi \|_{C^2_{x,t}} \|V_\xi \|_{C^1_{x,t}} + \|a_\xi \|_{C^1_{x,t}} \|W_\xi^{(c)} \|_{C^1_{x,t}},
 \\
 &\lesssim \ell^{18} r_\perp^{-1} r_{||}^{-1/2} \lambda_{q+1}^{-2} \lambda_{q+1}^2  + \ell^{-13} \frac{r_{\perp}}{r_{||}} r_{\perp}^{-1} r_{||}^{-1/2} \lambda_{q+1}^{2}  \lesssim \lambda_{q+1}^{2+6/7 + 36\alpha},
 \\
 \\
 \|w_{q+1}^{(t)} \|_{C^1_{x,t}} & \lesssim \|w_{q+1}^{(t)} \|_{C^{1,\alpha}_{x,t}}  \lesssim \frac{1}{\mu} \|a_\xi^2 \phi_\xi^2 \psi_\xi^2 \|_{C^{1, \alpha}_{x,t}} 
 \\
 & \lesssim \frac{1}{\mu} \|a_\xi^2\|_{C^0_{x,t}} \|\phi_\xi^2\|_{C^0_{x,t}} \| \psi_\xi^2 \|_{C^{1, \alpha}_{t}} \lesssim \frac{1}{\mu} \ell^{-16} r_{\perp}^{-2} r_{||}^{-1/2} \lambda_{q+1}^{2} \lambda_{q+1}^{2\alpha} \lesssim \lambda_{q+1}^{3 -2/7 + 34 \alpha}.
 \end{align*}
In the latter inequality we have used that $\PH$ is continuous on H{\"o}lder spaces. Therefore, using that $\alpha <1/40$, that $a_0$ is sufficiently large and thanks to estimate \eqref{stima_v_moll}, we have
 $$ \|v_{q+1}\|_{C^1_{x,t}(B_{2s}(t_0) \times \T^3)} \leq \|\tilde{v}_\ell\|_{C^1_{x,t}(B_{2s}(t_0) \times \T^3)} + \|w_{q+1}\|_{C^1_{x,t}} \leq \lambda_{q+1}^{4}. $$
\subsection{The new Reynolds stress} 
\vspace{0.5cm}
Here we will define the new Reynolds stress $\mathring{R}_{q+1}$. By definitions, $\tilde{v}_{q+1}$ solves
\begin{align*}
\text{div } & \mathring{R}_{q+1} - \nabla p_{q+1}  \notag
\\
&= 
\partial_t  (\tilde{v}_\ell + w_{q+1}) + \diver ((\tilde{v}_\ell + w_{q+1}) \otimes (\tilde{v}_\ell + w_{q+1}))  -  \Delta (\tilde{v}_\ell + w_{q+1}) 
\notag \\
& = \underbrace{
-  \Delta w_{q+1} + \partial_t (w_{q+1}^{(p)} + w_{q+1}^{(c)}) + \text{div} (\tilde{v}_\ell \otimes w_{q+1} + w_{q+1} \otimes \tilde{v}_\ell)
}_{\text{div} (R_{lin}) + \nabla p_{lin}} \notag
\notag \\
& + \underbrace{ \text{div} \left ( (w_{q+1}^{(c)} + w_{q+1}^{(t)}) \otimes w_{q+1}  +  w_{q+1}^{(p)}  \otimes (w_{q+1}^{(c)} + w_{q+1}^{(t)})  \right )
}_{\text{div} (R_{cor}) + \nabla p_{cor}}
\notag \\
& + \underbrace{
\text{div} (w_{q+1}^{(p)} \otimes w_{q+1}^{(p)} + {R_\ell}) + \partial_t w_{q+1}^{(t)}
}_{\text{div}(R_{osc}) + \nabla p_{osc}}
+ \text{div}(R_{com}) - \nabla p_\ell.
\end{align*}
More precisely
\begin{align*}
R_{lin} & := -  \RR \Delta w_{q+1} + \RR \partial_t (w_{q+1}^{(p)} +w_{q+1}^{(c)}) + \tilde{v}_\ell \mathring{\otimes} w_{q+1} +
w_{q+1} \mathring{\otimes} \tilde{v}_\ell,
\\
R_{cor}& := \left ( w_{q+1}^{(c)} + w_{q+1}^{(t)} \right) \mathring{\otimes} w_{q+1}  + w_{q+1}^{(p)}
 \mathring{\otimes} \left ( w_{q+1}^{(c)} + w_{q+1}^{(t)} \right),
 \\
 R_{osc} & := \sum_{\xi \in \Lambda} \RR \left ( \nabla a_{\xi}^2 \PP (W_\xi \otimes W_\xi ) \right) - \frac{1}{\mu} \sum_{\xi \in \Lambda} \RR \left ( \partial_t a^2_\xi (\phi_\xi^2 \psi_\xi^2 \xi ) \right),
 \\
 p_{lin} &:= 2 \tilde{v}_\ell \cdot w_{q+1},
 \\
 p_{cor} &:= |w_{q+1}|^2 - |w_{q+1}^{(p)}|^2,
 \\
 p_{osc} &:= \rho + P_{q+1},
\end{align*}
where  the definitions of $p_{osc}$ and $R_{osc}$ are justified by the previous computation \eqref{conto_per_R_oscillation}.
Hence we define
$$p_{q+1}:= p_\ell - p_{cor} - p_{lin} - p_{osc}$$
and
$$ \mathring{R}_{q+1}:= R_{lin} + R_{cor} + R_{osc} + R_{com} + R_{loc},$$
where the last two were defined during the mollification step.
We observe that the new Reynolds-stress $\mathring{R}_{q+1}$ is traceless, this property will be crucial in the energy estimates.
\subsection{Estimates for the new Reynolds stress}
We  need to estimate the new stress $\mathring{R}_{q+1}$ in $L^1$. However, since the Calder{\'o}n-Zygmund operator $\nabla \RR$  fails to be bounded on $L^1$, we introduce an integrability parameter,
$$p \in (1,2] \text{ such that } p-1 \ll 1.$$
 Recalling the parameters choice \eqref{d_parametri_r_perp_r_par}, we fix $p$ to obey
\begin{align} \label{parametro_p_scelta_piccolo}
r_{\perp}^{2/p -2} r_{||}^{1/p -1} \leq (2\pi)^{1/7} \lambda_{q+1}^{16(p-1)/(7p)} \leq \lambda_{q+1}^{\alpha},
\end{align}
where we recall that $0<\alpha < \frac{1}{7 \cdot 74}$.
For instance, we take $p= \frac{32}{32- 7 \alpha}$.

\subsubsection{Linear error Reynolds stress} 
By using Proposition \ref{p_reynoldsdiv} we get that
\begin{align*}
\|R_{lin} \|_{L^p}
& \lesssim  \| \RR \Delta w_{q+1}\|_{L^p} +
\|\tilde{v}_\ell \mathring{\otimes} w_{q+1} +
w_{q+1} \mathring{\otimes} \tilde{v}_\ell \|_{L^p}
+  \| \RR \partial_t (w_{q+1}^{(p)} +w_{q+1}^{(c)}) \|_{L^p}
\\
& \lesssim
 \| \nabla w_{q+1}\|_{L^p} +
 \|\tilde{v}_\ell \|_{L^\infty} \|w_{q+1}\|_{L^p} +
 \sum_{\xi \in \Lambda} \| \partial_t \curl (a_\xi V_\xi) \|_{L^p}
 \\
 & \lesssim
 \sum_{\xi \in \Lambda} \|a_{\xi}\|_{C^1} \|W_\xi \|_{W^{1,p}}
 +
 \|\tilde{v}_\ell\|_{C^1} \sum_{\xi \in \Lambda} \|a_{\xi}\|_{C^1} \|W_\xi \|_{W^{1,p}} \\
 & +
 \sum_{\xi \in \Lambda} (\|a_\xi \|_{C^1} \| \partial_t V_\xi \|_{W^{1,p}} + \|\partial_t a_\xi \|_{C^1} \|V_\xi \|_{W^{1,p}}).
\end{align*}
Thus, by appealing to Lemma \ref{l_intermittent_inequalities}, Lemma \ref{l_amplitudes}, estimates \eqref{stima_mollificazione}   and to the choice of $p= \frac{32}{32- 7 \alpha}$, we conclude
\begin{align*}
\|R_{lin} \|_{L^p} &\lesssim 
\ell^{-13} r_\perp^{\frac{2}{p}-1 } r_{||}^{\frac{1}{p}-1/2} \lambda_{q+1}+\ell^{-18 } r_\perp^{\frac{2}{p}-1 } r_{||}^{\frac{1}{p}-\frac{1}{2}} \lambda_{q+1}
+ \ell^{-18 } \lambda_{q+1}^{-1} r_\perp^{\frac{2}{p}-1 } r_{||}^{\frac{1}{p}-\frac{1}{2}} 
\\
& \lesssim 
\ell^{-18} \lambda_{q+1}^{\alpha} \lambda_{q+1} r_\perp r_{||}^{1/2}  \lesssim \lambda_{q+1}^{37 \alpha  -\frac{1}{7}} \ll \frac{1}{6} \lambda_{q+1}^{-3 \zeta} \delta_{q+2},
\end{align*}
where for the last inequality we used that $\alpha <\frac{1}{7 \cdot 74}$ and $2 \beta b + 3 \zeta < \frac{1}{14}$.
\subsubsection{Corrector error}
The estimate on the corrector error is a consequence of (\ref{stima_w_q_p_w_q_c_w_q_t}) and our choice of $p$\begin{align*}
\| R_{cor} \|_{L^p} & \leq 
\| w_{q+1}^{(c)} + w_{q+1}^{(t)} \|_{L^{2p}} \|w_{q+1}\|_{L^{2p}}
+
\|w_{q+1}^{(p)}\|_{L^{2p}}  \|w_{q+1}^{(c)} + w_{q+1}^{(t)}\|_{L^{2p}}
\\
& \leq
2 \| w_{q+1}^{(c)} + w_{q+1}^{(t)} \|_{L^{2p}} \|w_{q+1}\|_{L^{2p}}
\\
& \lesssim \ell^{-18} r_\perp^{1/p -1} r_{||}^{\frac{1}{2p} - \frac{1}{2}} \lambda_{q+1}^{-1/7} \lesssim \lambda_{q+1}^{36 \alpha +  \frac{\alpha}{2} - 1/7} \ll \frac{1}{6} \lambda_{q+1}^{-3 \zeta} \delta_{q+2},
\end{align*}
where the last inequality is justified as before.
\subsubsection{Oscillation error}
By using  the boundedness on $L^p$ of the Reynolds operator $\RR$, Lemma \ref{l_intermittent_inequalities}, Lemma \ref{l_amplitudes},  \eqref{d_parametri_r_perp_r_par}, Fubini (to separate $\phi_\xi$ and $\psi_\xi$) and the choice of $p$ we can estimate the second summand in the definition of $R_{osc}$ as
\begin{align*}
 \left  \|\frac{1}{\mu} \sum_{\xi \in \Lambda} \RR \left ( \partial_t a^2_\xi (\phi_\xi^2 \psi_\xi^2 \xi ) \right) \right \|_{L^p}
& \leq 
\mu^{-1} \sum_{\xi \in \Lambda}                \|a_{\xi}\|_{C^1}^2 \|\phi_\xi\|_{L^2p}^2 \|\psi_\xi \|_{L^2p}^2  \lesssim  \mu^{-1} \ell^{-26} \lambda_{q+1}^{\alpha} \ll \frac{1}{6} \lambda_{q+1}^{-3 \zeta} \delta_{q+2}.
\end{align*}
To estimate the remaining summand we will use Lemma \ref{p_reynolds_prodotto_1/k}. We apply it with
$a= \nabla a_\xi^2$, $\kappa= \sigma= \lambda_{q+1} r_\perp$ and $\mathbb{P}_{\geq \sigma} (f)= \PP (W_\xi \otimes W_\xi )$, that is a $\frac{\T^3}{\sigma}-$periodic function.
Then we have
\begin{align*}
\left \| \sum_{\xi \in \Lambda} \RR \left ( \nabla a_{\xi}^2 \PP (W_\xi \otimes W_\xi ) \right) \right \|_{L^p}
& \lesssim ( \lambda_{q+1} r_\perp)^{-1} \| \PP (W_\xi \otimes W_\xi ) \|_{L^p}
\|\nabla a_\xi^2 \|_{C^1}
\\
& \lesssim \ell^{-21}  \lambda_{q+1}^{-1/7} \|W_\xi \|_{L^{2p}}^2 
\lesssim   \ell^{-21}  \lambda_{q+1}^{-1/7}  r_\perp^{\frac{2}{p}-1 } r_{||}^{\frac{1}{p}-\frac{1}{2}}
\\
& \lesssim  \lambda_{q+1}^{42 \alpha + \alpha -1/7} \ll \frac{1}{6} \lambda_{q+1}^{-3 \zeta} \delta_{q+2}.
\end{align*}\\

Then \eqref{stima_l1_su_Rq} at step $q+1$ follows easily using also the previous estimates for $R_{com}$ and $R_{loc}$ 
\begin{align*}
\| \mathring{R}_{q+1} \|_{L^1} & \leq
\|R_{lin}\|_{L^1} + \|R_{cor}\|_{L^1} + \|R_{osc}\|_{L^1} + \|R_{com}\|_{L^1} + \|R_{loc} \|_{L^1} 
\\
& \leq
\frac{2 }{3} \lambda_{q+1}^{-3 \zeta} \delta_{q+2} + \frac{1 }{3} \lambda_{q+1}^{-3 \zeta} \delta_{q+2}  \leq  \lambda_{q+1}^{-3 \zeta} \delta_{q+2},
\end{align*}
where in the last inequality we have used that $2 \beta b + 3 \zeta < \alpha$.
Finally, since $\text{Supp}_T w_{q+1} \subset I_{q+1}$, then also  \eqref{supporto_R_q} holds at step $q+1$.

\subsection{Energy estimate}
In order to complete the proof of Proposition \ref{p_iterative} we only need to prove the energy estimate \eqref{stima_energia} at step $q+1$.
\begin{lemma} 
The following estimate holds for all $t \in I_0$
\begin{align}\label{stima_energia_finale}
\frac{\delta_{q+2}}{\lambda_{q+1}^{\zeta/2}} \leq e(t) - \int_{\T^3} |v_{q+1}(x,t)|^2 dx \leq \frac{\delta_{q+2}  \epsilon_1}{\delta_1}.
\end{align}
\end{lemma}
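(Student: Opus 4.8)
The plan is to expand the kinetic energy of $v_{q+1}=\tilde v_\ell+w_{q+1}$ into a principal term coming from $w_{q+1}^{(p)}$, which by construction reproduces $e(t)-\tfrac{\delta_{q+2}}{2}$, plus error terms negligible at the scale $\delta_{q+2}\lambda_{q+1}^{-\zeta/2}$. First I would write $v_{q+1}=\tilde v_\ell+w_{q+1}^{(p)}+(w_{q+1}^{(c)}+w_{q+1}^{(t)})$ and expand the square:
\begin{align*}
\int_{\T^3}|v_{q+1}|^2\,dx &= \int_{\T^3}|\tilde v_\ell|^2\,dx+\int_{\T^3}|w_{q+1}^{(p)}|^2\,dx+2\int_{\T^3}\tilde v_\ell\cdot w_{q+1}\,dx \\
&\quad +2\int_{\T^3}w_{q+1}^{(p)}\cdot(w_{q+1}^{(c)}+w_{q+1}^{(t)})\,dx+\int_{\T^3}|w_{q+1}^{(c)}+w_{q+1}^{(t)}|^2\,dx.
\end{align*}
Since the $W_\xi$ have mutually disjoint supports and $|\xi|=1$, one has $|w_{q+1}^{(p)}|^2=\sum_{\xi\in\Lambda}a_\xi^2\psi_\xi^2\phi_\xi^2$, and the identity $\fint_{\T^3}W_\xi\otimes W_\xi=\xi\otimes\xi$ gives $\fint_{\T^3}\psi_\xi^2\phi_\xi^2=1$. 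Decomposing $\psi_\xi^2\phi_\xi^2=1+\PP(\psi_\xi^2\phi_\xi^2)$ and estimating $\bigl|\int_{\T^3}a_\xi^2\,\PP(\psi_\xi^2\phi_\xi^2)\,dx\bigr|$ by integration by parts exactly as in the oscillation–error estimate (using that $\PP(\psi_\xi^2\phi_\xi^2)$ carries frequencies $\gtrsim\lambda_{q+1}r_\perp$ while $\|\nabla a_\xi^2\|_{C^0}$ is bounded by a fixed power of $\ell^{-1}$ by Lemma~\ref{l_amplitudes}), one obtains $\int_{\T^3}|w_{q+1}^{(p)}|^2\,dx=\sum_{\xi\in\Lambda}\int_{\T^3}a_\xi^2\,dx+E_0$ with $|E_0|\ll\delta_{q+2}\lambda_{q+1}^{-\zeta/2}$, the last bound coming from the same parameter relations ($\alpha<\frac{1}{7\cdot 74}$, $2\beta b+3\zeta<\frac{1}{14}$, etc.) already used for $R_{osc}$.

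Next I would match this with $e(t)$. Taking the trace of the algebraic identity \eqref{identity_geometric} and using that $R_\ell=\eta\mathring{\overline R}_\ell$ is traceless gives $\sum_{\xi\in\Lambda}a_\xi^2=3\rho$, hence $\sum_{\xi}\int_{\T^3}a_\xi^2\,dx=3\int_{\T^3}\rho(x,t)\,dx$. For $t\in I_0$ we have $I_0\subset\tilde I_q$, so $\tilde\eta(t)\equiv 1$ and by \eqref{d_rho} $\rho(x,t)=\overline\rho(t)\,\chi\!\bigl(\tfrac{|R_\ell(x,t)|4\lambda_q^{\zeta}\delta_1}{\delta_{q+1}}\bigr)$; then the very definition \eqref{d_rho_bar} of $\overline\rho$ yields $3\int_{\T^3}\rho(x,t)\,dx=e(t)-\int_{\T^3}|\tilde v_\ell(x,t)|^2\,dx-\tfrac{\delta_{q+2}}{2}$. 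Combining with the previous step,
\[
e(t)-\int_{\T^3}|v_{q+1}|^2\,dx=\frac{\delta_{q+2}}{2}-E_0-2\int_{\T^3}\tilde v_\ell\cdot w_{q+1}\,dx-2\int_{\T^3}w_{q+1}^{(p)}\cdot(w_{q+1}^{(c)}+w_{q+1}^{(t)})\,dx-\int_{\T^3}|w_{q+1}^{(c)}+w_{q+1}^{(t)}|^2\,dx .
\]

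It then remains to show that each error term is $\ll\delta_{q+2}\lambda_{q+1}^{-\zeta/2}$. For the low–high interaction $\int\tilde v_\ell\cdot w_{q+1}$ I would split into the three increments and use $\|\tilde v_\ell\|_{C^0}\lesssim\lambda_q^4$ from \eqref{stima_mollificazione}: the parts with $w_{q+1}^{(p)},w_{q+1}^{(c)}$ are bounded by their $L^1$ norms in \eqref{stima_w_q_p_w_q_c_w_q_t} (at $p=1$), giving $\lesssim\lambda_q^4\ell^{-O(1)}\lambda_{q+1}^{-8/7}$, and for $w_{q+1}^{(t)}$ one uses that $\tilde v_\ell$ is divergence free (so $\PH$ drops out) together with the factor $\mu^{-1}\sim\lambda_{q+1}^{-9/7}$; all are $\ll\delta_{q+2}\lambda_{q+1}^{-\zeta/2}$ once $b$ is large and $\alpha,\beta,\zeta$ small (one could also gain an extra $\lambda_{q+1}^{-1}$ via Lemma~\ref{l_decorellation}). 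The last two terms are controlled by \eqref{stima_w_q+1_L2} and the $L^2$ corrector estimates in \eqref{stima_w_q_p_w_q_c_w_q_t}: since $\|w_{q+1}^{(c)}+w_{q+1}^{(t)}\|_{L^2}\lesssim\ell^{-O(1)}\lambda_{q+1}^{-1/7}$ and $\|w_{q+1}^{(p)}\|_{L^2}\lesssim\delta_{q+1}^{1/2}$, both are $\lesssim\delta_{q+1}^{1/2}\ell^{-O(1)}\lambda_{q+1}^{-1/7}\ll\delta_{q+2}\lambda_{q+1}^{-\zeta/2}$ by the parameter inequalities already imposed. Thus $e(t)-\int_{\T^3}|v_{q+1}|^2\,dx=\tfrac{\delta_{q+2}}{2}+O\!\bigl(\delta_{q+2}\lambda_{q+1}^{-\zeta/2}\bigr)$ with an implicit constant that can be made as small as desired by enlarging $a_0$; since $\delta_1\le\epsilon_1$ and $\lambda_{q+1}^{-\zeta/2}\le\tfrac14$ for $a_0$ large, this quantity lies in $\bigl[\delta_{q+2}\lambda_{q+1}^{-\zeta/2},\,\delta_{q+2}\epsilon_1/\delta_1\bigr]$, which is exactly \eqref{stima_energia_finale}.

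The main obstacle is the step isolating the principal term: controlling the decorrelation error $E_0$ in $\int_{\T^3}|w_{q+1}^{(p)}|^2\,dx$ and, in the same spirit, the low–high term $\int_{\T^3}\tilde v_\ell\cdot w_{q+1}\,dx$, i.e. quantifying that the fast oscillations of $\psi_\xi^2\phi_\xi^2$ around its mean $1$ beat the slowly varying amplitude $a_\xi^2$ at the right scale. This is precisely where the choices $r_\perp=\lambda_{q+1}^{-6/7}(2\pi)^{-1/7}$, $r_{||}=\lambda_{q+1}^{-4/7}$ in \eqref{d_parametri_r_perp_r_par} and the smallness of $\alpha,\beta,\zeta$ relative to $1/b$ enter; everything else is a bookkeeping of the estimates already established for $R_{lin}$, $R_{cor}$, $R_{osc}$ and $w_{q+1}$.
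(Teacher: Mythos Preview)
Your proposal is correct and follows essentially the same approach as the paper: expand $|v_{q+1}|^2$, identify $\int_{\T^3}|w_{q+1}^{(p)}|^2\,dx=3\int_{\T^3}\rho\,dx+E_0$ via the disjointness of the $W_\xi$ and the trace of \eqref{identity_geometric}, use the definition \eqref{d_rho_bar} of $\overline\rho$ (with $\tilde\eta\equiv 1$ on $I_0$) to recover $e(t)-\int|\tilde v_\ell|^2-\tfrac{\delta_{q+2}}{2}$, and then show all remaining cross terms are negligible. The only cosmetic difference is that the paper packages the decorrelation bound on $E_0$ through Lemma~\ref{l_media_piccola} (applied to $a_\xi^2$ and $\text{Tr}(W_\xi\otimes W_\xi-\fint W_\xi\otimes W_\xi)$) rather than invoking the oscillation-error machinery, and it bounds the cross terms a bit more tersely; your more explicit treatment of $\int\tilde v_\ell\cdot w_{q+1}^{(t)}$ via the self-adjointness of $\PH$ on divergence-free fields is a valid way to avoid the $L^1$-unboundedness of $\PH$.
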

\begin{proof}
Recalling \eqref{d_w_q+1_p} and the mutually disjoint supports of $\{W_\xi\}_{\xi \in \Lambda}$ we notice that
\begin{align} \label{l_energia_conto1}
|w_{q+1}^{(p)}|^2 & = \left  | \sum_{\xi \in \Lambda} a_\xi W_\xi  \right |^2 = \sum_{\xi \in \Lambda} \text{Tr} (a_\xi W_\xi \otimes  a_\xi W_\xi ) \notag
\\
& = \sum_{\xi \in \Lambda} a_\xi^2 \text{Tr} \left ( \fint_{\T^3} W_\xi \otimes W_\xi \right ) 
+
\sum_{\xi \in \Lambda} a_\xi^2 \text{Tr} \left (W_\xi \otimes W_\xi - \fint_{\T^3} W_\xi \otimes W_\xi \right ) \notag
\\
& = 3 \rho + \sum_{\xi \in \Lambda} a_\xi^2 \text{Tr} \left (W_\xi \otimes W_\xi - \fint_{\T^3} W_\xi \otimes W_\xi \right ),
\end{align}
where in the last equation we used the traceless property of $R_\ell$ and \eqref{identity_geometric}.

Applying Lemma \ref{l_media_piccola}  with $f$ replaced by $a_\xi^2$ (which oscillates at frequency $\sim \ell^{-5}$),  the constant $C_f \sim \ell^{- 16}$ (thanks to the estimate of Lemma \ref{l_amplitudes}) and $g_\sigma$ replaced with $W_\xi \otimes W_\xi - \fint_{\T^3} W_\xi \otimes W_\xi $ (where $\sigma = \lambda_{q+1} r_\perp$), we get
\begin{align}\label{l_energia_conto2}
\left |  \int_{\T^3} \sum_{\xi \in \Lambda} a_\xi^2 \text{Tr} \left (W_\xi \otimes W_\xi - \fint_{\T^3} W_\xi \otimes W_\xi \right ) \right | \lesssim \ell^{-21 } \frac{1}{\lambda_{q+1} r_\perp} \ll \frac{\delta_{q+2}}{6},
\end{align}
where in the last inequality we used that $\alpha < \frac{1}{7 \cdot 74}$ and $  2 \beta b  < \frac{1}{14}$.
We write the identity
\begin{align} 
e(t) - \int_{\T^3} |v_{q+1}|^2 & = e(t) - \left ( \int_{\T^3} |\tilde{v}_{\ell}|^2 + \int_{\T^3} |w_{q+1}^{(p)}|^2 \right )  - \left ( \int_{\T^3} |w_{q+1}^{(c)} + w_{q+1}^{(t)} |^2 + 2 \int_{\T^3} \tilde{v}_\ell \cdot w_{q+1}  \right )  \notag
\\
&- \left( 2 \int_{\T^3} w_{q+1}^{(p)} \cdot  (w_{q+1}^{(c)} + w_{q+1}^{(t)}) \right) \label{l_energia_conto3}
\end{align}
and thanks to \eqref{l_energia_conto1}, \eqref{l_energia_conto2} and to the definition of $\rho$ \eqref{d_rho}, using also that $\tilde{\eta} \equiv 1$ in $I_0$, we have
\begin{align*}
\frac{\delta_{q+2}}{\lambda_{q+1}^{\zeta/4}} \leq e(t) - \left ( \int_{\T^3} |\tilde{v}_{\ell}|^2 + \int_{\T^3} |w_{q+1}^{(p)}|^2 \right ) \leq  \frac{2 \delta_{q+2}}{3}, \text{ for all } t \in I_0,
\end{align*}
up to possibly enlarge $a_0(\zeta)$.
Moreover, by using \eqref{stima_mollificazione} and \eqref{stima_w_q_p_w_q_c_w_q_t} we can estimate
\begin{align*}
\left | \int_{\T^3} |w_{q+1}^{(c)} + w_{q+1}^{(t)} |^2 + 2 \int_{\T^3} \tilde{v}_\ell \cdot w_{q+1}  \right | \leq \frac{\delta_{q+2}}{\lambda_{q+1}^{\zeta/3}},
\\
\left | 2 \int_{\T^3} w_{q+1}^{(p)} \cdot  (w_{q+1}^{(c)} + w_{q+1}^{(t)}) \right | \leq \frac{\delta_{q+2}}{\lambda_{q+1}^{\zeta/3}},
\end{align*}
from which \eqref{stima_energia_finale} follows.
\end{proof}
\appendix
\section{Useful tools}

In this section we state some useful results needed in the convex integration scheme.
\begin{prop} \label{p_disug_composizione_holder}
Let $\Psi : \Omega \rightarrow \R $ and $u : \R^n \rightarrow \Omega$ be two smooth functions, with $ \Omega \subset \R^N$. Then, for every $m \in \N^+$, there exists a constant $C>0$ (depending only on $m, N, n$) such that
\begin{align*}
[\Psi \circ u ]_m \leq C( [\Psi]_1  [u]_m  + \|D \Psi \|_{C^{m-1}} \|u\|_{C^0}^{m-1} [u]_m ), 
\\
[\Psi \circ u ]_m \leq C( [\Psi]_1  [u]_m  + \|D \Psi \|_{C^{m-1}} \|u\|_{C^1}^{m}  ), 
\end{align*}
where $[f]_m = \max_{|\beta=m|} \|D^\beta f\|_{0}$.
\end{prop}

\begin{prop} \label{p_disug_prodotto_holder}
Let $f,g : \T^3 \rightarrow \R $ be two smooth real value functions. 
For any integer  $r \geq  0$ there exists a constant $C>0$, depending only on $r$ such that
\begin{align*}
[f g ]_r \leq C ( [f]_r  \|g\|_{C^0} + \|f\|_{C^0} [g]_r),
\end{align*}
where $[f]_m = \max_{|\beta=m|} \|D^\beta f\|_{0}$.
\end{prop}
The following lemma is essentially Lemma 3.7 in \cite{BV19}.
\begin{lemma} \label{l_decorellation}
Fix integers $N, \sigma \geq 1$ and let $ \zeta >1 $ such that
\begin{align}
\frac{2 \pi \sqrt{3} \zeta}{\sigma} \leq \frac{1}{3} \hspace{0.3cm} and \hspace{0.3cm} \zeta^4 
\frac{(2 \pi \sqrt{3} \zeta)^N}{\sigma^N} \leq 1.
\end{align} 
Let $p \in \{1,2\}$ and let $f, g \in C^{\infty}(\T^3; \R^3)$. Suppose that there exists a constant $C_{f}>0 $ such that
$$ \|\nabla^j f \|_{L^p} \leq C_f \zeta^j,$$
holds for all $0 \leq j \leq N+4$. Then we have that 
$$\|f g_\sigma\|_{L^p} \leq C_0 C_f \|g_{\sigma}\|_{L^p},$$
where $C_0$ is a universal constant.
\end{lemma}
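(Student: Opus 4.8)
The plan is to reduce the claim to a ``slowly varying times fast oscillating'' decorrelation estimate on a partition of $\T^3$ into cubes of size $\sim\sigma^{-1}$. First, since dividing $f$ by $C_f$ scales both sides of the desired inequality linearly, we may assume $C_f=1$, i.e. $\|\nabla^j f\|_{L^p}\le\zeta^j$ for $0\le j\le N+4$. Partition $\T^3$ into the $\sigma^3$ congruent cubes $\{Q\}$ of side $2\pi/\sigma$; since $g_\sigma$ is $\left(\T/\sigma\right)^3$-periodic, each $Q$ is a period cell for $g_\sigma$, so with \emph{no error}
\[
\int_Q|g_\sigma|^p\,dx=\sigma^{-3}\,\|g_\sigma\|_{L^p}^p\qquad\text{for every }Q .
\]
Estimating $|fg_\sigma|\le\|f\|_{L^\infty(Q)}|g_\sigma|$ on each $Q$ and summing gives
\[
\|fg_\sigma\|_{L^p}^p\ \le\ \sigma^{-3}\|g_\sigma\|_{L^p}^p\sum_Q\|f\|_{L^\infty(Q)}^p ,
\]
so it is enough to show $\sum_Q\|f\|_{L^\infty(Q)}^p\le C_0^p\,\sigma^3$ for a universal $C_0$; un-normalising then yields the lemma.

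For the last bound I would invoke the Sobolev embedding $W^{k,p}\hookrightarrow L^\infty$ on a cube, valid with $k=2$ when $p=2$ and $k=3$ when $p=1$ (both $\le N+4$, so covered by the hypotheses); rescaling the unit-cube inequality to $Q$ gives, with a universal constant,
\[
\|f\|_{L^\infty(Q)}\ \le\ C\sum_{j=0}^{k}\Bigl(\tfrac{2\pi}{\sigma}\Bigr)^{j-3/p}\|\nabla^j f\|_{L^p(Q)} .
\]
Raising to the power $p$, summing over the (tiling) cubes, and using $\sum_Q\|\nabla^j f\|_{L^p(Q)}^p=\|\nabla^j f\|_{L^p(\T^3)}^p\le\zeta^{jp}$ leads to $\sum_Q\|f\|_{L^\infty(Q)}^p\le C'\sigma^3\sum_{j=0}^{k}(2\pi\zeta/\sigma)^{jp}$; the first hypothesis $2\pi\sqrt3\,\zeta/\sigma\le1/3$ makes every term of this finite sum at most $1$, so the quantity is $\le C_0^p\sigma^3$, as needed.

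In fact, as the computation shows, the stated inequality uses only the first smallness condition and the cases $k\le3$; the full strength of the hypotheses — the derivatives $\|\nabla^j f\|_{L^p}\le\zeta^j$ up to $j=N+4$ and the second condition $\zeta^4(2\pi\sqrt3\,\zeta)^N\sigma^{-N}\le1$ — is carried along to match \cite[Lemma 3.7]{BV19}, where the same partition argument is pushed further to compare $\|fg_\sigma\|_{L^p}^p$ with $\bigl(\fint_{\T^3}|f|^p\bigr)\|g_\sigma\|_{L^p}^p$ up to an error that is a large power of $\zeta/\sigma$ (each extra derivative of $f$ costs a factor $\zeta$ but saves $\sigma^{-1}$ over a cube, a net gain $\zeta/\sigma\ll1$, which after $N$ iterations beats the $N$-dependent constants). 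For the present upper bound there is no genuine obstacle; the one point to handle with care is the exact cancellation in the first display — that $g_\sigma$ is \emph{periodic} over each cube of the partition, so that the only approximation actually made is to treat $f$ as constant on each cube, which is legitimate precisely because $\zeta\cdot\mathrm{diam}(Q)=2\pi\sqrt3\,\zeta/\sigma\le\tfrac13$.
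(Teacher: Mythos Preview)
Your proof is correct; the paper does not give its own argument but simply cites \cite[Lemma~3.7]{BV19}, and your partition into period cells of $g_\sigma$ combined with the rescaled Sobolev embedding $W^{k,p}\hookrightarrow L^\infty$ on each cell is precisely the proof found there. Your side observation that the stated one-sided bound actually needs only the first smallness condition and derivatives of $f$ up to order three is also accurate.
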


The following lemma is essentially Lemma B.1 in \cite{BV19}.
\begin{lemma} \label{p_reynolds_prodotto_1/k}
Fix $\kappa \geq 1$, $p \in (1,2]$, and a sufficiently large $L \in \N$. Let $a \in C^{L} (\T^3)$ be such that there exists $1 \leq \lambda \leq \kappa$, $C_a >0$ with
\begin{align*}
\| D^j a \|_{L^\infty} \leq C_a \lambda^j,
\end{align*}
for all $0 \leq j \leq L$. Assume furthermore that $\int_{\T^3} a(x) \mathbb{P}_{\geq \kappa} f(x) dx =0$. Then we have
\begin{align*}
\| | \nabla |^{-1} (a \mathbb{P}_{\geq \kappa} (f)) \|_{L^p} \lesssim C_a \left ( 1 + \frac{\lambda^L}{\kappa^{L-2}} \right ) \frac{\|f\|_{L^p}}{\kappa}
\end{align*}
for any $f \in L^p(\T^3)$, where the implicit constant depends on $p$ and $L$.
\end{lemma}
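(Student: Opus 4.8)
The plan is to follow the standard route for inverse-divergence commutator estimates, exploiting the spectral separation between $a$ — which by the bounds $\|D^ja\|_{L^\infty}\le C_a\lambda^j$ with $\lambda\le\kappa$ is morally concentrated at frequencies $\lesssim\kappa$ — and the factor $\mathbb{P}_{\ge\kappa}(f)$, whose Fourier transform is supported in $\{|k|\ge\kappa\}$. Set $g:=\mathbb{P}_{\ge\kappa}(f)$, so that $\|g\|_{L^p}\lesssim\|f\|_{L^p}$ for $p\in(1,2]$ (in the applications $g$ is already frequency localized, so this projection bound is harmless), and put $h:=a\,g$; by hypothesis $\int_{\T^3}h=0$, so $|\nabla|^{-1}h$ is well defined. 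Using a smooth Littlewood--Paley partition of frequency space I would split $h=h_{\mathrm{lo}}+h_{\mathrm{hi}}$ into its parts at frequencies $|k|<\kappa/2$ and $|k|>\kappa/4$, each of which has zero average (the low part because its average equals that of $h$), and estimate the two separately.

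For the high piece, the composition of $|\nabla|^{-1}$ with the high-pass onto $\{|k|>\kappa/4\}$ is a Mikhlin--H\"ormander multiplier of size $\lesssim\kappa^{-1}$, hence bounded on $L^p$ with norm $\lesssim\kappa^{-1}$; thus
\[
\| |\nabla|^{-1} h_{\mathrm{hi}} \|_{L^p}\lesssim\kappa^{-1}\|h\|_{L^p}\le\kappa^{-1}\|a\|_{L^\infty}\|g\|_{L^p}\lesssim C_a\,\frac{\|f\|_{L^p}}{\kappa}.
\]

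For the low piece the gain comes from $a$. Since $g$ has no Fourier modes of size $<\kappa$, a mode $m$ of $a$ can contribute to $h_{\mathrm{lo}}$ only if $|m|>\kappa/2$; hence $h_{\mathrm{lo}}$ is unchanged if $a$ is replaced by $\mathbb{P}_{>\kappa/2}a$. Because $\mathbb{P}_{>\kappa/2}a$ has Fourier support in $\{|k|>\kappa/2\}$, a standard Bernstein-type estimate (write it as an order $-L$ multiplier, whose convolution kernel has $L^1$-mass $\lesssim\kappa^{-L}$, applied to $\nabla^La$) gives $\|\mathbb{P}_{>\kappa/2}a\|_{L^\infty}\lesssim\kappa^{-L}\|\nabla^La\|_{L^\infty}\lesssim C_a(\lambda/\kappa)^L$ — the only point where $a\in C^L$ is used. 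Since $|\nabla|^{-1}$, restricted to mean-zero functions with Fourier support in $\{|k|<\kappa/2\}$, is an $L^p$-bounded multiplier of norm $\lesssim1$ (its symbol is bounded by $1$ on the relevant frequencies), we get
\[
\| |\nabla|^{-1} h_{\mathrm{lo}} \|_{L^p}\lesssim\|(\mathbb{P}_{>\kappa/2}a)\,g\|_{L^p}\le\|\mathbb{P}_{>\kappa/2}a\|_{L^\infty}\|g\|_{L^p}\lesssim C_a\Big(\frac{\lambda}{\kappa}\Big)^{L}\|f\|_{L^p}\le C_a\,\frac{\lambda^L}{\kappa^{L-2}}\,\frac{\|f\|_{L^p}}{\kappa},
\]
where the last step uses $\kappa\ge1$. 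Adding the two estimates yields the claim, with implicit constant depending only on $p$ and $L$.

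The main obstacle, and the step needing the most care, is the low piece: one must argue that only the high Fourier modes of $a$ enter it and then quantify the $L^\infty$-smallness of $\mathbb{P}_{>\kappa/2}a$ through the smoothing of $|\nabla|^{-L}$ — a bare Littlewood--Paley high-pass is not $L^\infty$-bounded, so the negative powers of $|\nabla|$ are genuinely needed — all the while keeping track that every argument of $|\nabla|^{-1}$ is mean-zero, which is precisely the role of the hypothesis $\int a\,\mathbb{P}_{\ge\kappa}f=0$. The requirement that $L$ be large is not essential to the mechanism; it makes the multiplier and Bernstein estimates comfortable and renders the error term $\lambda^L/\kappa^{L-2}$ negligible in the applications, where $\lambda\ll\kappa$. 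One could equivalently run the classical integral-kernel argument — expressing $|\nabla|^{-1}$ via its periodised Riesz-type kernel and Taylor-expanding $a$ to order $L$ about the base point — but the bookkeeping of the Taylor remainder is heavier than the Littlewood--Paley splitting above.
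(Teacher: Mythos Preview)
Your argument is correct. The paper does not supply its own proof of this lemma, citing it instead as Lemma~B.1 of \cite{BV19}; your frequency-splitting approach---gaining $\kappa^{-1}$ on the high-frequency part of $a\,\mathbb{P}_{\ge\kappa}f$ directly from $|\nabla|^{-1}$, and handling the low-frequency part via the Bernstein bound $\|\mathbb{P}_{>\kappa/2}a\|_{L^\infty}\lesssim C_a(\lambda/\kappa)^L$ after observing that only high modes of $a$ can contribute there---is precisely the standard argument from that reference (your low-piece bound is in fact slightly sharper than the stated $\lambda^L/\kappa^{L-2}$, which is harmless).
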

\begin{lemma} \label{l_media_piccola}
Let $g : \T^3 \rightarrow \R$ such that
$$\fint_{\T^3} g(x)dx =0,$$
 and let $g_\sigma : \T^3 \rightarrow \R$: $g_\sigma (x):= g (\sigma x)$.
 Let $f : \T^3 \rightarrow \R$ such that
 $$\| \nabla f \|_{C^0} \leq C_f \zeta,$$
 then we have
 $$\left | \int_{\T^3} g_\sigma(x) f(x) dx  \right | \lesssim \frac{C_f \zeta}{ \sigma} \|g_\sigma \|_{L^1(\T^3)},$$
 where $\lesssim$ means up to a universal constant.
\end{lemma}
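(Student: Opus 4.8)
The plan is to exploit the scale separation between the highly oscillatory factor $g_\sigma$, which varies at frequency $\sigma$, and the factor $f$, whose gradient is only of size $C_f\zeta$ and which is therefore essentially constant on scales $\ll 1/\sigma$. The mechanism is the classical one: since $g$ has zero average on $\T^3$, the rescaled function $g_\sigma(x)=g(\sigma x)$ is periodic of period $1/\sigma$ in each coordinate and has vanishing integral over each of the $\sigma^3$ congruent subcubes of sidelength $1/\sigma$ that tile $\T^3$; pairing it with a function that is nearly constant on these subcubes thus produces a quantity of order $\sigma^{-1}$.

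To make this precise I would first partition $\T^3$ into these subcubes $\{Q_k\}_k$ and pick an arbitrary point $x_k\in Q_k$ in each. Using $\int_{Q_k} g_\sigma\,dx=0$, which follows from the periodicity and zero average of $g$ after the change of variables $y=\sigma x$, one writes
$$\int_{\T^3} g_\sigma(x)\,f(x)\,dx \;=\; \sum_k \int_{Q_k} g_\sigma(x)\,\bigl(f(x)-f(x_k)\bigr)\,dx .$$
The mean value theorem gives $|f(x)-f(x_k)|\le \|\nabla f\|_{C^0}\,\operatorname{diam}(Q_k)\lesssim C_f\zeta/\sigma$ for $x\in Q_k$, so that
$$\Bigl|\int_{\T^3} g_\sigma\,f\,dx\Bigr| \;\lesssim\; \frac{C_f\zeta}{\sigma}\sum_k\int_{Q_k}|g_\sigma(x)|\,dx \;=\; \frac{C_f\zeta}{\sigma}\,\|g_\sigma\|_{L^1(\T^3)},$$
which is exactly the claimed bound, with a constant depending only on the dimension (namely $\sqrt 3$, the diameter of a unit cube).

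There is no substantial difficulty in this argument; the only step deserving a word of care is the tiling, i.e.\ that $g_\sigma$ is genuinely periodic at scale $1/\sigma$ so that its integral vanishes on each $Q_k$ — this is the case in all our applications, where $\sigma=\lambda_{q+1}r_\perp\in\N$ and $g_\sigma$ is $(\T/\sigma)^3$-periodic. An alternative, entirely equivalent route is to work on the Fourier side, expanding $g=\sum_{k\neq 0}\widehat g(k)e^{2\pi i k\cdot x}$ and pairing against the high frequencies $\sigma k$ of $f$ using $|\widehat f(\sigma k)|\lesssim \|\nabla f\|_{C^0}/(\sigma|k|)$, together with $\|g_\sigma\|_{L^1(\T^3)}=\|g\|_{L^1(\T^3)}$; but the real-space computation above is more elementary and yields the $L^1$ norm of $g_\sigma$ directly on the right-hand side.
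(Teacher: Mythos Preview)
Your argument is correct and is the standard way to prove this estimate. The paper does not actually include a proof of this lemma: it is stated in the appendix among several auxiliary results that are listed without proof. Your cube-decomposition argument, together with the remark that $\sigma\in\N$ in all applications so that the tiling is exact, is precisely the intended justification.
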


\begin{bibdiv}
\begin{biblist}

\bib{BCD20}{article}{
   author={Bru\'e, Elia},
   author={Colombo, Maria},
   author={De Lellis, Camillo},
   title={Positive solutions of transport equations and classical nonuniqueness of characteristic curves},
   journal={		arXiv:2003.00539 [math.AP]},
   date={2020},
 
}

\bib{BCV}{article}{
   author={Buckmaster, Tristan},
   author={Colombo, Maria},
   author={Vicol, Vlad},
   title={Wild solutions of the Navier-Stokes equations whose singular sets in time have Hausdorff dimension strictly less than 1},
   journal={	arXiv:1809.00600 [math.AP]},
   date={2020},
 
}

\bib{BDSV19}{article}{
   author={Buckmaster, Tristan},
   author={De Lellis, Camillo},
   author={Sz\'{e}kelyhidi, L\'{a}szl\'{o}, Jr.},
   author={Vicol, Vlad},
   title={Onsager's conjecture for admissible weak solutions},
   journal={Comm. Pure Appl. Math.},
   volume={72},
   date={2019},
   number={2},
   pages={229--274},
 
}

\bib{BV19}{article}{
   author={Buckmaster, Tristan},
   author={Vicol, Vlad},
   title={Nonuniqueness of weak solutions to the Navier-Stokes equation},
   journal={Ann. of Math. (2)},
   volume={189},
   date={2019},
   number={1},
   pages={101--144},

}

\bib{BV_fen}{article}{
   author={Buckmaster, Tristan},
   author={Vicol, Vlad},
   title={Convex integration and phenomenologies in turbulence},
   journal={EMS Surv. Math. Sci.},
   volume={6},
   date={2019},
   number={1},
   pages={173--263},
   issn={2308-2151},
   review={\MR{4073888}},
   doi={10.4171/emss/34},
}

\bib{BVS}{article}{
   author={Buckmaster, Tristan},
   author={Shkoller, Steve},
   author={Vicol, Vlad},
   title={Nonuniqueness of weak solutions to the SQG equation},
   journal={Comm. Pure Appl. Math.},
   volume={72},
   date={2019},
   number={9},
   pages={1809--1874},
}

\bib{CKL20}{article}{
   author={Cheng, Xinyu },
   author={Kwon, Hyunju},
   author={Li,  Dong},
   title={Non-uniqueness of steady-state weak solutions to the surface quasi-geostrophic equations},
   journal={		arXiv:2007.09591 [math.AP]},
   date={2020},
}

\bib{CL20}{article}{
   author={Cheskidov, Alexey},
   author={ Luo, Xiaoyutao},
   title={Stationary and discontinuous weak solutions of the Navier-Stokes equations},
   journal={arXiv:1901.07485 [math.AP]},
   date={2020},
 
}

\bib{CDLDR18}{article}{
   author={Colombo, Maria},
   author={De Lellis, Camillo},
   author={De Rosa, Luigi},
   title={Ill-posedness of Leray solutions for the hypodissipative
   Navier-Stokes equations},
   journal={Comm. Math. Phys.},
   volume={362},
   date={2018},
   number={2},
   pages={659--688},
   issn={0010-3616},
   review={\MR{3843425}},
   doi={10.1007/s00220-018-3177-x},
}

\bib{DanSz}{article}{
   author={Daneri, Sara},
   author={Sz\'{e}kelyhidi, L\'{a}szl\'{o}, Jr.},
   title={Non-uniqueness and h-principle for H\"{o}lder-continuous weak
   solutions of the Euler equations},
   journal={Arch. Ration. Mech. Anal.},
   volume={224},
   date={2017},
   number={2},
   pages={471--514},
}

\bib{DLSZ09}{article}{
   author={De Lellis, Camillo},
   author={Sz\'{e}kelyhidi, L\'{a}szl\'{o}, Jr.},
   title={The Euler equations as a differential inclusion},
   journal={Ann. of Math. (2)},
   volume={170},
   date={2009},
   number={3},
   pages={1417--1436},
}

\bib{DLSZ13}{article}{
   author={De Lellis, Camillo},
   author={Sz\'{e}kelyhidi, L\'{a}szl\'{o}, Jr.},
   title={Dissipative continuous Euler flows},
   journal={Invent. Math.},
   volume={193},
   date={2013},
   number={2},
   pages={377--407},
}

\bib{DS15}{article}{
   author={De Lellis, Camillo},
   author={Sz\'{e}kelyhidi, L\'{a}szl\'{o}, Jr.},
   title={On $h$-principle and Onsager's conjecture},
   journal={Eur. Math. Soc. Newsl.},
   number={95},
   date={2015},
   pages={19--24},
}

\bib{DLSZ17}{article}{
   author={De Lellis, Camillo},
   author={Sz\'{e}kelyhidi, L\'{a}szl\'{o}, Jr.},
   title={High dimensionality and h-principle in PDE},
   journal={Bull. Amer. Math. Soc. (N.S.)},
   volume={54},
   date={2017},
   number={2},
   pages={247--282},
}

\bib{DR19}{article}{
   author={De Rosa, Luigi},
   title={Infinitely many Leray-Hopf solutions for the fractional
   Navier-Stokes equations},
   journal={Comm. Partial Differential Equations},
   volume={44},
   date={2019},
   number={4},
   pages={335--365},
   issn={0360-5302},
   review={\MR{3941228}},
   doi={10.1080/03605302.2018.1547745},
}

\bib{DRT20}{article}{
   author={De Rosa, Luigi},
   author={Tione, Riccardo},
   title={Sharp energy regularity and typicality results for H\"older solutions of incompressible Euler equations},
   journal={	arXiv:1908.03529 [math.AP]},
   date={2020},
}

\bib{Is18}{article}{
   author={Isett, Philip},
   title={A proof of Onsager's conjecture},
   journal={Ann. of Math. (2)},
   volume={188},
   date={2018},
   number={3},
   pages={871--963},
}

\bib{IM20}{article}{
 author={Isett, Philip},
   author={Ma, Andrew},
   title={A direct approach to nonuniqueness and failure of compactness for the SQG equation},
   journal={		arXiv:2007.03078 [math.AP]},
   date={2020},
}

\bib{LP}{book}{
   author={Lemari\'{e}-Rieusset, Pierre Gilles},
   title={The Navier-Stokes problem in the 21st century},
   publisher={CRC Press, Boca Raton, FL},
   date={2016},
   pages={xxii+718},
  
}

\bib{MSa20}{article}{
   author={Modena, Stefano},
   author={Sattig, Gabriel},
   title={Convex integration solutions to the transport equation with full
   dimensional concentration},
   journal={Ann. Inst. H. Poincar\'{e} Anal. Non Lin\'{e}aire},
   volume={37},
   date={2020},
   number={5},
   pages={1075--1108},
}

\bib{MS18}{article}{
   author={Modena, Stefano},
   author={Sz\'{e}kelyhidi, L\'{a}szl\'{o}, Jr.},
   title={Non-uniqueness for the transport equation with Sobolev vector
   fields},
   journal={Ann. PDE},
   volume={4},
   date={2018},
   number={2},
   pages={Paper No. 18, 38},
}

\bib{MS19}{article}{
   author={Modena, Stefano},
   author={Sz\'{e}kelyhidi, L\'{a}szl\'{o}, Jr.},
   title={Non-renormalized solutions to the continuity equation},
   journal={Calc. Var. Partial Differential Equations},
   volume={58},
   date={2019},
   number={6},
   pages={Paper No. 208, 30},
}

\end{biblist}
\end{bibdiv}

\end{document}